\pdfoutput=1
\RequirePackage{amsmath}
\newif\ifSpringer
%
\Springerfalse 

\ifSpringer
\RequirePackage{fix-cm}
\documentclass[smallextended]{svjour3}
\smartqed	
\usepackage{etoolbox}
\AtEndEnvironment{proof}{\phantom{}\qed}
\else
\documentclass[11pt]{scrartcl}
\usepackage{amsthm}
\newtheorem{theorem}{Theorem}[section]
\newtheorem{lemma}{Lemma}[section]
\newtheorem{remark}{Remark}[section]

\fi

\usepackage[utf8]{inputenc}
\usepackage[T1]{fontenc}
\usepackage{lmodern}	
\usepackage[english]{babel}

\usepackage[draft=false,kerning=true]{microtype}
\usepackage[numbers]{natbib} 

\usepackage{amsmath,amssymb,graphicx}

\usepackage{adjustbox}
\usepackage{rotating}
\usepackage{float}
\usepackage[caption = false]{subfig}
\usepackage[ruled,vlined,linesnumbered]{algorithm2e}

\usepackage{booktabs,tabularx} 
\usepackage{siunitx} 
\usepackage{pgfplots}
\usepackage{pgfplotstable} 
\usepackage{longtable}

\usepackage{subfig}
\usepackage{tablefootnote}

\usepackage[title]{appendix}

\usepackage[colorinlistoftodos,prependcaption,textsize=tiny]{todonotes}

\usepackage{scalerel}
\usepackage{tikz}
\usetikzlibrary{svg.path}

\usepackage[normalem]{ulem} 

\DeclareOldFontCommand{\bf}{\normalfont\bfseries}{\mathbf}
\DeclareMathOperator*{\argmax}{arg\,max}
\DeclareMathOperator*{\argmin}{arg\,min}

\newcommand{\R}{\mathbb{R}}
\newcommand{\sn}{{\mathcal S}^n}
\newcommand{\inprod}[2]{{\langle #1,#2 \rangle}} 
\newcommand{\trace}{\textrm{trace}}

\newcommand{\nosemic}{\renewcommand{\@endalgocfline}{\relax}}
\newcommand{\dosemic}{\renewcommand{\@endalgocfline}{\algocf@endline}}
\let\oldnl\nl
\newcommand{\nonl}{\renewcommand{\nl}{\let\nl\oldnl}}

\newcommand{\centerhfill}[1][\quad]{\hspace{\stretch{0.5}}#1\hspace{\stretch{0.5}}}

\newcommand{\rew}{\textcolor{black}}

\definecolor{orcidlogocol}{HTML}{A6CE39}
\tikzset{
	orcidlogo/.pic={
		\fill[orcidlogocol] svg{M256,128c0,70.7-57.3,128-128,128C57.3,256,0,198.7,0,128C0,57.3,57.3,0,128,0C198.7,0,256,57.3,256,128z};
		\fill[white] svg{M86.3,186.2H70.9V79.1h15.4v48.4V186.2z}
		svg{M108.9,79.1h41.6c39.6,0,57,28.3,57,53.6c0,27.5-21.5,53.6-56.8,53.6h-41.8V79.1z M124.3,172.4h24.5c34.9,0,42.9-26.5,42.9-39.7c0-21.5-13.7-39.7-43.7-39.7h-23.7V172.4z}
		svg{M88.7,56.8c0,5.5-4.5,10.1-10.1,10.1c-5.6,0-10.1-4.6-10.1-10.1c0-5.6,4.5-10.1,10.1-10.1C84.2,46.7,88.7,51.3,88.7,56.8z};
	}
}
\providecommand{\keywords}[1]
{
	\small	
	\textbf{\textit{Keywords---}} #1
}

\newcommand\orcidicon[1]{\href{https://orcid.org/#1}{\mbox{\scalerel*{
				\begin{tikzpicture}[yscale=-1,transform shape]
				\pic{orcidlogo};
				\end{tikzpicture}
			}{|}}}}
\usepackage{hyperref}

\pgfplotsset{compat=1.15}
\pgfplotstableset{
	begin table=\begin{longtable},
		end table=\end{longtable},
}

\makeatletter
%
\makeatother

\begin{document}

\title{SDP-based bounds for graph partition via extended ADMM\thanks{This project has received funding from the European Union’s Horizon 2020 research and innovation programme under the Marie Sk\l{}odowska-Curie grant agreement MINOA No 764759.}
}

\ifSpringer

\author{Angelika Wiegele     \and
	Shudian Zhao 
}

\else
\author{Angelika Wiegele \orcidicon{0000-0003-1670-7951} 
	 \footnote{Institut f\"ur Mathematik, Alpen-Adria-Universit\"at Klagenfurt, Universit\"atsstraße 65-67, 9020, \href{mailto:angelika.wiegele@aau.at}{angelika.wiegele@aau.at}, \href{mailto:shudian.zhao@aau.at}{shudian.zhao@aau.at}} \,
			 \and Shudian Zhao \orcidicon{0000-0001-6352-0968} $^\dagger$\footnote{Corresponding author}
}

\fi

\ifSpringer

\institute{Angelika Wiegele   \at
	Institut f\"ur Mathematik, Alpen-Adria-Universit\"at Klagenfurt, Universit\"atsstra{\ss}e 65-67, 9020 Klagenfurt  \\
	ORCiD: 0000-0003-1670-7951 \\
	\email{angelika.wiegele@aau.at}           
	\and
	Shudian Zhao (Corresponding Author) 
	\at 
	Institut f\"ur Mathematik, Alpen-Adria-Universit\"at Klagenfurt, Universit\"atsstra{\ss}e 65-67, 9020 Klagenfurt \\
	ORCiD: 0000-0001-6352-0968 \\
	\email{shudian.zhao@aau.at} 
}
\date{Received: date / Accepted: date}
\fi


\maketitle

\begin{abstract}

We study two NP-complete graph partition problems, $k$-equipartition problems and graph partition problems with knapsack constraints (GPKC). We introduce tight SDP relaxations with nonnegativity constraints to get lower bounds, the SDP relaxations are solved by an extended alternating direction method of multipliers (ADMM). In this way, we obtain high quality lower bounds for $k$-equipartition on large instances up to $n =1000$ vertices within as few as five minutes and for GPKC problems up to $n=500$ vertices within as little as one hour. On the other hand, interior point methods fail to solve instances from $n=300$ due to memory requirements. 
We also design heuristics to generate upper bounds from the SDP solutions, giving us tighter upper bounds than other methods proposed in the literature with low computational expense.

\ifSpringer
\keywords{Graph partitioning \and Semidefinite programming \and ADMM \and Combinatorial optimization}
\else
\keywords{Graph partitioning, Semidefinite programming, ADMM, Combinatorial optimization}
\fi

\end{abstract}

\section{Introduction}

Graph partition problems have gained importance recently due to their applications in the area of engineering and computer science such as telecommunication~\cite{lisser2003graph} and parallel computing~\cite{hendrickson2000graph}. The solution of a graph partition problem would serve to partition the vertices of a graph $G(V,E)$ into several groups under certain constraints for capacity or cardinality in each group. The optimal solution is expected to have the smallest total weight  of cut edges. This problem is NP-complete~\cite{garey1974some}. Previous studies  worked on improving quadratic programming or linear programming formulations to reduce the computational expense with commercial solvers~\cite{fan2010linear}.  
Relaxations are also used to approximate this problem. \citet{garey1974some} were the first to use eigenvalue and eigenvector information to get relaxations for graph partition. \citet{ghaddar2011branch} have recently used a branch-and-cut algorithm based on SDP relaxations to compute global optimal solutions of $k$-partition problems.

The $k$-equipartition problem, which is to find a partition with the minimal total weight of cut edges and the vertex set $V$ is equally partitioned into $k$ groups, is one of the most popular graph partition problems. Another problem that interests us is the graph partition problem under knapsack constraints (GPKC). In the GPKC, each vertex of the graph network is assigned a weight and the knapsack constraint \rew{needs} to be satisfied in each group.

\citet{lisser2003graph} compared various semidefinite programming (SDP) relaxations and linear programming (LP) relaxations for $k$-equipartition problems. They showed that the nonnegativity constraints become dominating in the SDP relaxation when $k$ increases. However, the application of this formulation is limited by the computing power of SDP solvers, because adding all the sign constraints for a symmetric matrix of dimension $n$ causes $O(n^2)$ new constraints and entails a huge computational burden especially for large instances.  \citet{nguyen2016contributions} proposed a tight LP relaxation for GPKC problems and a heuristic to build upper bounds as well. Semidefinite programming has shown advantages in generating tight lower bounds for quadratic problems with knapsack constraints \cite{helmberg1996quadratic} and $k$-equipartition problem, but so far there have been no attempts to apply SDP relaxations to GPKC.

Algorithms for solving SDPs have been intensively studied in the previous years. \citet{malick2009regularization} designed the boundary point method to solve SDP problems with equations. It can solve instances with a huge number of constraints that interior point methods (IPMs) fail to solve. This method falls into the class of alternating direction method of multipliers (ADMM). ADMM has been studied in the area of convex optimization and been proved of linear convergence when one of the objective terms is strongly convex \cite{nishihara2015general}. In recent years, there have been studies focusing on generalizing this idea on solving convex optimizations with more blocks of variables. \citet{chen2016direct}, for example, have proved the convergence of 3-block ADMM on certain scenarios, but the question as to whether the direct extension 3-block ADMM on SDP problems is convergent is still open.

There have also been varied ideas about combining other approaches with ADMM for solving SDP problems. \citet{de2018using} added the dual factorization in the ADMM update scheme, while \citet{sun2019sdpnal+} combined ADMM with Newton's methods. Both attempts have improved the performance of the algorithms.

\subsection*{Main results and outline}
In this paper, we will introduce an extended ADMM algorithm and apply it to the tight SDP relaxations for graph partition problems with nonnegativity constraints. We will also introduce heuristics to obtain a feasible partition from the solution of the SDP relaxation.

This paper is structured as follows. In Section~\ref{sec:gpp}, we will introduce two graph partition problems, the $k$-equipartition problem and the graph partition problems with knapsack constraints (GPKC). We will discuss different SDP relaxations for both problems. In Section~\ref{sec:admm}, we will design an extended ADMM and illustrate its advantages in solving large SDP problems with nonnegativity constraints. In Section~\ref{sec:post-processing},  we will introduce two post-processing methods used to generate lower bounds using the output from the extended ADMM. In Section~\ref{sec:up_bound}, we will design heuristics to build a tight upper bound from the SDP solution to address the original problem. Numerical results of experiments carried out on graphs with different sizes and densities will be presented in Section~\ref{sec:tests}.
Section~\ref{sec:conclusion} concludes the paper.

\subsection*{Notation}
We define by $e_n$ the vector of all ones of length $n$,
by $\mathbf{0}_n$ the vector of all zeros of length $n$ and by $\mathbf{0}_{n\times n}$ the square matrix of all zeros of dimension $n$. We omit the subscript in case the dimension is clear from the context.
The notation $[n]$ stands for the set of integers $\{1,\dots,n\}$.
Let $\sn$ denote the set of all $n\times n$ real symmetric matrices. We denote by $M\succeq 0$ that the matrix $M$ is positive semidefinite and let ${\mathcal S}_+^n$ be the set of all positive semidefinite matrices of order
$n\times n$. We denote by $\inprod{\cdot}{\cdot}$ the 
trace inner product. That is, for any $M, N \in \R^{n\times n}$, we define $\inprod{M}{N}:= \trace (M^\top N )$. 
Its associated norm is the Frobenius norm, denoted by $\| M\|_F := \sqrt{\trace (M^\top M )}$.
We denote by $\textrm{diag}(M)$ the operation of getting the diagonal entries of matrix $M$ as a vector.
The projection on the cone of positive semidefinite matrices is denoted by $\mathcal{P}_{\succeq 0}(\cdot)$. The projection onto the interval $[L,U]$ is denoted by $\mathcal{P}_{[L,U]}$.
We denote by $\lambda(\cdot)$ the eigenvalues. That is, for any $M \in \R^{n\times n}$, we define $\lambda(M)$ the set of all eigenvalues of $M$. Also, we denote $\lambda_{\max}(\cdot)$ the largest eigenvalue. 
We denote by $x \sim U(0,1)$ a variable $x$ from uniform distribution between 0 and 1.
We define by $\textrm{argmaxk}(\cdot,s)$ the index set of the $s$ largest elements.

\section{Graph partition problems}\label{sec:gpp}
\subsection{$k$-equipartition problem}
For a graph $G(V,E)$, the $k$-equipartition problem is the problem of finding an equipartition of the vertices in $V$ with $k$ groups that has the minimal total weight of edges cut by this partition. The problem can be described with binary variables,
\begin{equation}\label{eq:keq-miqp}
\begin{aligned}
\min&~ \frac{1}{2}\langle L,Y Y^\top\rangle \\
\textrm{s.t.}~& Y e_k = e_n, \\
& Y^\top  e_n = m e_k,\\
&Y_{ij} \in \{ 0, 1 \}, \forall i \in [n], j\in [k], 
\end{aligned}
\end{equation}
where $L$ is the Laplacian  matrix for $G$, variable $Y \in \R^{n\times k}$ indicates which group each vertex is assigned to and $e_n$ (resp. $e_k$) is the all-one vector of dimension $n$ (resp. $k$).

This problem is NP-hard and \citet{lisser2003graph} proposed the SDP relaxation
\begin{equation}\label{eq:keq-sdp}
\begin{aligned}
\min~ & \frac{1}{2}\langle L,X \rangle\\
    \textrm{s.t.}~& \textrm{diag}(X) = e,\\
    & X  e = m e, \\
    & X \succeq 0,
\end{aligned}
\end{equation}
where $X \in \mathcal{S}_n$, and $e \in \R^n$ is the all-one vector.

To tighten this SDP relaxation, we can add more inequalities to problem~\eqref{eq:keq-sdp}. Here, we introduce two common inequalities for SDP relaxations derived from $0/1$ problems. 

The process of relaxing $Y Y^\top$ to $X$ implies that $X$ is a nonnegative matrix, hence the first group of inequalities we consider is $X \geq 0$ and the corresponding new SDP relaxation is

\begin{equation}\label{eq:keq-dnn}
\begin{aligned}
\min~ & \frac{1}{2}\langle L,X \rangle\\
    \textrm{s.t.}~& \textrm{diag}(X) = e,\\
    & X e = me,\\
    & X \succeq 0,\\
    & X \geq 0.
\end{aligned}
\end{equation}
\rew{This kind of SDP is also called a doubly nonnegative program (DNN) since the matrix variable is both, positive semidefinite and elementwise nonnegative.}

Another observation is the following. For any vertices triple $(i,j,k)$, if vertices~$i$ and $j$ are in the same group, and vertices~$j$ and $k$ are in the same group, then vertices $i$ and $k$ must be in the same group. This can be modeled by the transitivity constraints~\cite{lisser2003graph} given as follows
\begin{equation*}
  \textrm{MET}:= \{X=(X_{ij}) \mid X_{ij}+ X_{ik} \leq 1 + X_{jk},\forall i,j,k \in [n]\}.
\end{equation*}
The set formed by these inequalities is the so-called metric polytop. Adding the transitivity constraints to the SDP relaxation~(\ref{eq:keq-dnn}) gives
\begin{equation}\label{eq:keq-met}
\begin{aligned}
\min~& \frac{1}{2}\langle L, X \rangle \\
\textrm{s.t.}~& \textrm{diag}(X) = e,\\
& X e = m e,\\
& X \succeq 0,\\
& X \geq 0,\\
& X \in \textrm{MET}.
\end{aligned}
\end{equation}

\subsection{Graph partition problem under knapsack constraints (GPKC)}

Given a graph $G(V,E)$ with nonnegative weights on the vertices and a capacity bound~$W$, the GPKC asks to partition the vertices such that the total weight of cut edges is minimized and the total weight of vertices in each group does not exceed the capacity bound~$W$.

A mathematical programming formulation is given as 
\begin{equation}\label{eq:gpkc-miqp}
    \begin{aligned}
        \min ~& \frac{1}{2} \langle L, YY^\top  \rangle\\
         \textrm{s.t.}~& Y e_n = e_n,\\
        & Y^\top a \leq We_n,\\
        & Y_{ij} \in \{0,1\}^{n\times n},\forall i \in [n], j\in [n],
    \end{aligned}
\end{equation}
where $Y\in \R^{n\times n}$, $a\in\R^n$ is the vertex weight vector and $W$ is the capacity bound.
We assume $a_i \leq W~\forall i \in [n]$, otherwise the problem is infeasible.
Again, we can derive the SDP relaxation
\begin{equation}\label{eq:gpkc-sdp}
\begin{aligned}
    \min~& \frac{1}{2}\langle L,X\rangle\\
    \textrm{s.t.}~&  \textrm{diag}(X) = e,\\
    &  X a\leq W e, \\
    & X \succeq 0.
\end{aligned}
\end{equation}
Similar as the $k$-equipartition problem, we can tighten the relaxation by imposing sign constraints, i.e.,
\begin{equation}\label{eq:gpkc-dnn}
\begin{aligned}
    \min~&\frac{1}{2} \langle L,X\rangle\\
    \textrm{s.t.}~&  \textrm{diag}(X) = e,\\
    & X a\leq W e,\\
    & X \succeq 0,\\
    & X \geq 0,
\end{aligned}
\end{equation}
and additionally by imposing the transitivity constraints which gives
\begin{equation}\label{eq:gpkc-met}
\begin{aligned}
\min~& \frac{1}{2}\langle L,X\rangle\\
\textrm{s.t.}~&  \textrm{diag}(X) = e,\\
& X a\leq W e,\\
& X \succeq 0,\\
& X \geq 0,\\
& X \in \textrm{MET}.
\end{aligned}
\end{equation}

\section{Extended ADMM}\label{sec:admm}
The SDP relaxations introduced in Section~\ref{sec:gpp} have a huge number of constraints, even for medium-sized graphs.
The total number of sign constraints for $X$ is $O(n^2)$ and adding the constraint $X \in \textrm{MET}$ in the SDP relaxations causes $3\binom{n}{3}$ extra constraints.
Therefore, solving these tight relaxations is out of reach for state-of-the-art algorithms like interior point methods (IPMs).
However, finding high quality lower bounds by tight SDP relaxations for graph partition problems motivates us to develop an efficient algorithm that can deal with SDP problems with inequalities and sign constraints on large-scale instances.
Since the 2-block alternating direction method of multiplier (ADMM) has shown efficiency in solving large-scale instances that interior point methods fail to solve, we are encouraged to extend this algorithm for SDP problems with inequalities in the form
\begin{equation}\label{eq:sdp-general-p}
        \begin{aligned}
        \min~&\langle C, X \rangle\\
             \text{s.t.}~&\mathcal{A}(X) = b, \\
                            &\mathcal{B}(X) = s, \\
                & X \succeq 0,\\
                & L \leq X \leq U,\\
                & l\leq s \leq u,
        \end{aligned}
\end{equation}
where $C \in \sn$, $\mathcal{A}: \sn \rightarrow \R^m$, $\mathcal{B}: \sn \rightarrow \R^q$, $b \in \R^m$, $l, u \in \R^q$. We have the slack variable $s \in \R^q$ to form the inequality constraints, and $l$ and $u$ can be set to $-\infty$ and $+\infty$ respectively. Also, $L \in \sn $ and $U \in \sn$ can be symmetric matrices filled with all elements as $-\infty$ and $+\infty$ respectively. That makes formulation~\eqref{eq:sdp-general-p} able to represent SDP problems with any equality and inequality constraints. This formulation is inspired by the work of \citet{sun2019sdpnal+}.
\rew{All semidefinite programs given above fit into this formulation. E.g., in~\eqref{eq:keq-dnn} operator $\mathcal{A}$ includes the diagonal-constraint and the constraint $Xe=me$, the operator $\mathcal{B}$ as well as the variables $s$ are not present, $L$ is the matrix of all zeros and $U$ the matrix having $+\infty$ everywhere.} 

Following the ideas for the 2-block ADMM \citep{malick2009regularization}, we form the update scheme in Algorithm~\ref{alg:eADMM} to solve the dual of problem~\eqref{eq:sdp-general-p}.
\begin{lemma}
	The	dual problem for \eqref{eq:sdp-general-p} is given as 
	\begin{equation}\label{eq:sdp-general-d}
	\begin{aligned}
	\max~&  b^\top y + \mathcal{F}_{1}(S) + \mathcal{F}_{2}(v)\\
	\textrm{s.t.}~&\mathcal{A}^{*}y + \mathcal{B}^{*}\bar{y} + S + Z = C,\\
	&\bar{y} = v,\\
	&Z \succeq 0,
	\end{aligned}
	\end{equation}
	where $\mathcal{F}_1(S) = \inf_{W} \{ \langle S,W \rangle \mid  L \leq W \leq U \}$ and $\mathcal{F}_2(v) = \inf_{\omega} \{ \langle v,\omega \rangle \mid l \leq \omega \leq u\}$. 
\end{lemma}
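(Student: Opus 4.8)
The plan is to obtain \eqref{eq:sdp-general-d} by Lagrangian duality, dualizing only the affine constraints $\mathcal{A}(X)=b$ and $\mathcal{B}(X)=s$ together with the conic constraint $X\succeq 0$, while keeping the two box constraints $L\le X\le U$ and $l\le s\le u$ inside the domain of the inner minimization. Concretely, I would attach a multiplier $y\in\R^m$ to $b-\mathcal{A}(X)=0$, a multiplier $\bar y\in\R^q$ to $s-\mathcal{B}(X)=0$, and $Z\succeq 0$ to $X\succeq 0$, and write
\begin{equation*}
\mathcal{L}(X,s;y,\bar y,Z)=\langle C,X\rangle+y^{\top}(b-\mathcal{A}(X))+\bar y^{\top}(s-\mathcal{B}(X))-\langle Z,X\rangle .
\end{equation*}
Grouping the terms that are linear in $X$ shows that its coefficient is exactly $S:=C-\mathcal{A}^{*}y-\mathcal{B}^{*}\bar y-Z$, so $\mathcal{L}=b^{\top}y+\langle S,X\rangle+\bar y^{\top}s$, and the dual function is the infimum of this expression over $L\le X\le U$ and $l\le s\le u$.

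The second step is to read off the two inner minimizations. By definition, $\inf\{\langle S,X\rangle : L\le X\le U\}=\mathcal{F}_1(S)$ and $\inf\{\bar y^{\top}s : l\le s\le u\}=\mathcal{F}_2(\bar y)$, so the dual function equals $b^{\top}y+\mathcal{F}_1(S)+\mathcal{F}_2(\bar y)$. Promoting $S$ to an explicit variable linked by $\mathcal{A}^{*}y+\mathcal{B}^{*}\bar y+S+Z=C$ and introducing the copy $v:=\bar y$ (so that the block on which $\mathcal{F}_2$ acts is separated from the block appearing in the large linear equation) turns this into problem~\eqref{eq:sdp-general-d}; this reformulation is a modeling choice tailored to the three-block splitting used in Algorithm~\ref{alg:eADMM} and does not change the optimal value. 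Weak duality — $\langle C,X\rangle\ge b^{\top}y+\mathcal{F}_1(S)+\mathcal{F}_2(v)$ for all primal and dual feasible points — comes for free from this construction, since at a primal feasible point the two equality terms vanish and $-\langle Z,X\rangle\le 0$.

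The routine part is the bookkeeping of the adjoints $\mathcal{A}^{*},\mathcal{B}^{*}$ and of the signs in the Lagrangian; the one point I would treat carefully is the behaviour of $\mathcal{F}_1$ and $\mathcal{F}_2$ when $L,U,l,u$ have entries equal to $\pm\infty$. There the infima defining $\mathcal{F}_1(S)$ and $\mathcal{F}_2(v)$ equal $-\infty$ exactly when $S$ or $v$ violates the sign pattern forced by the infinite bounds (for instance $\mathcal{F}_1(S)=-\infty$ unless $S\ge 0$ on the coordinates where $U=+\infty$), so these hidden feasibility restrictions are already encoded in the objective and need not be stated as separate constraints; verifying this is what makes the claimed dual valid in the full generality of~\eqref{eq:sdp-general-p}. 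If strong duality is also wanted later, it would follow from a Slater-type argument, since the primal feasible sets of the relaxations in Section~\ref{sec:gpp} have nonempty relative interior.
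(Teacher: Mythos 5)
Your derivation is correct, but it follows a genuinely different route from the paper. You form a partial Lagrangian: you dualize only $\mathcal{A}(X)=b$, $\mathcal{B}(X)=s$ and $X\succeq 0$ (with multipliers $y$, $\bar y$, $Z\succeq 0$), keep the box constraints $L\le X\le U$ and $l\le s\le u$ inside the inner infimum, and read off $\mathcal{F}_1(S)$ and $\mathcal{F}_2(\bar y)$ directly as the values of those box-constrained infima, with $S:=C-\mathcal{A}^*y-\mathcal{B}^*\bar y-Z$ promoted to a variable and $v$ introduced as a copy of $\bar y$. The paper instead rewrites the primal as \eqref{eq:sdp-general-p2} with the boxes split into one-sided inequalities, writes the classical dual \eqref{eq:sdp-general-d2} with four nonnegative multipliers $S_L,S_U,v_l,v_u$, and then argues that under the substitutions $S=S_L-S_U$ and $v=v_l-v_u$ the terms $\langle L,S_L\rangle-\langle U,S_U\rangle$ and $l^\top v_l-u^\top v_u$ coincide with $\mathcal{F}_1(S)$ and $\mathcal{F}_2(v)$, so that \eqref{eq:sdp-general-d} is equivalent to \eqref{eq:sdp-general-d2}. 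Your approach is shorter, gives weak duality for free, and handles infinite bounds cleanly (the sign restrictions forced by $\pm\infty$ entries are absorbed into $\mathcal{F}_1,\mathcal{F}_2$ taking the value $-\infty$), whereas the paper's splitting requires some bookkeeping about which of $S_{L,ij},S_{U,ij}$ is active; on the other hand, the paper's explicit split-variable dual \eqref{eq:sdp-general-d2} is reused later (in the optimality conditions \eqref{eq:optcond-compl-S}--\eqref{eq:optcond-compl-v} and in the proof of Theorem~\ref{the:lb_sdp_gen}), so deriving it here pays off elsewhere in the paper, while your argument would leave that reformulation still to be established.
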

\begin{proof}
	We derive this dual problem by rewriting the primal SDP problem \eqref{eq:sdp-general-p} in a more explicit way, namely
	\begin{equation}\label{eq:sdp-general-p2}
	\begin{aligned} 
	\min~&\langle C, X \rangle\\
	\textrm{s.t.}~&   \mathcal{A}(X) = b,\\
	&   \mathcal{B}(X) -s = \mathbf{0}_q,\\
	&   X \succeq 0,\\
	&   X \geq L,\\
	&  -X \geq -U,\\
	&   s \geq l,\\
	&   -s \geq -u.\\
	\end{aligned}
	\end{equation}
	Then, the dual of \eqref{eq:sdp-general-p2} is 
	\begin{equation}\label{eq:sdp-general-d2}
	\begin{aligned}
	\max \qquad & b^\top y + \mathbf{0}_q^\top \bar{y} + \langle \mathbf{0}_{n\times n},Z \rangle  + \langle L,S_L\rangle -\langle U, S_U\rangle + l^\top v_l - u^\top v_u \\
	\textrm{s.t. } \qquad & \mathcal{A}^*y + \mathcal{B}^* \bar{y} + Z + S_L - S_U = C,\\
	& -\bar{y} + v_l - v_u = \mathbf{0}_q,\\
	& Z \succeq 0, \\
	& S_L, S_U , v_l, v_u \geq 0.
	\end{aligned}
	\end{equation}
	The following equivalences hold for each entry of the dual variables $S_L$ and $S_U$ in~\eqref{eq:sdp-general-d2}
	\begin{equation*}
	\begin{aligned}
	& X_{ij} = L_{ij} \iff S_{L,ij} \neq 0, S_{U,ij} = 0 ;\\
	& X_{ij} = U_{ij} \iff S_{U,ij} \neq 0, S_{L,ji} = 0 ;\\
	& L_{ij} < X_{ij} < U_{ij} \iff S_{L,ij} = S_{U,ij} = 0.
	\end{aligned}
	\end{equation*}
	If we let $S := S_L - S_U $, then for each entry of $S$ 
	\begin{equation*}\begin{aligned} \label{eq:S}
	& \forall i \in [n], j \in [n], S_{ij} = \begin{cases}
	S_{L,ij}, & \text{if $S_{L,ij} \neq 0$},\\
	-S_{U,ij}, & \text{otherwise}.
	\end{cases} 
	\end{aligned}
	\end{equation*}
	Thus, in the dual objective function, we have
	\begin{equation}
	\begin{aligned}
	\langle L, S_L \rangle -\langle U, S_U \rangle = \sum_{ S_{L,ij} \neq 0 } L_{ij} S_{L,ij} - \sum_{ S_{L,ij} =0 } U_{ij} S_{U,ij}.
	\end{aligned}
	\end{equation}

	Expressing $\inf_{W}\{ \langle S, W\rangle \mid L\leq W \leq U \}$ element-wisely gives
	\begin{equation}\begin{aligned}
		& \inf_{W_{ij}} \{ S_{ij} W_{ij} \mid L_{ij} \leq W_{ij} \leq U_{ij} \} = \begin{cases}
		L_{ij}S_{ij}, & \text{if}~ S_{ij} \geq 0,\\
		U_{ij}S_{ij}, & \text{if}~ S_{ij} < 0. 
              \end{cases}
              \end{aligned}
	\end{equation}
	Combining the observations above, we end up with
	\begin{equation}
	\langle L, S_L \rangle - \langle U, S_U \rangle = \inf_{W}\{ \langle S, W\rangle \mid L\leq W \leq U \}.
	\end{equation}
	Similarly, let $v: = v_l- v_u$, then we have
	\begin{equation}
	\begin{aligned}
	l^\top v_l - u^\top v_u = \sum_{v_{l,k}\neq 0} l_{k} v_{l,k} - \sum_{v_{l,k} =0} u_{k}v_{u,k} = \inf_{\omega} \{ \langle v, \omega \rangle \mid l \leq \omega \leq u \}
	\end{aligned}
	\end{equation}
	
	Hence, problem~\eqref{eq:sdp-general-d} is equivalent to~\eqref{eq:sdp-general-d2} and it is the dual of~\eqref{eq:sdp-general-p}.
\end{proof}

We now form the augmented Lagrangian function corresponding to~\eqref{eq:sdp-general-d}.
\begin{equation}
\begin{aligned}
\mathcal{L}(y, \bar{y}, Z, S,v; X, s) =~&  b^\top y +\mathcal{F}_{1}(S) + \mathcal{F}_{2}(v) \\
& - \langle \mathcal{A}^*y + \mathcal{B}^* \bar{y} + S + Z - C ,X \rangle - \langle -\bar{y} + v, s\rangle \\
& - \frac{\sigma}{2} \|\mathcal{A}^*y + \mathcal{B}^* \bar{y} + S + Z - C\|^2_F - \frac{\sigma}{2} \|-\bar{y} + v \|^2.
\end{aligned}
\end{equation}
The saddle point of this augmented Lagrangian function is
\begin{equation} \label{eq:augL}
	(y^*, \bar{y}^*, Z^*, S^*,v^*, X^*, s^*) :=  \argmin_{X,s}\argmax_{y,\bar{y},v,Z,S} \mathcal{L}(y, \bar{y}, Z, S,v; X, s),
\end{equation}
which is also an optimal solution for the primal and dual problems. If both, the primal and the dual problem, have strictly feasible points, then a point $(X, s, y, \bar{y}, Z, S, v)$ is optimal if and only if
\begin{subequations}
\begin{align}
  \mathcal{A}(X) = b,\quad \mathcal{B}(X) = s,\quad L \leq X \leq U,\quad l\leq s \leq u,\label{eq:optcond-primalfeas}\\
\mathcal{A}^{*}y + \mathcal{B}^{*}\bar{y} + S + Z = C,\quad \bar{y} = v,\label{eq:optcond-dualfeas} \\
  X \succeq 0,\quad Z \succeq 0,\quad \inprod{X}{Z} = 0, \label{eq:optcond-compl}\\
  (X_{ij} -L_{ij})(U_{ij} -X_{ij})S_{ij} = 0, ~ \forall i \in [n]~ \forall j \in [n],\quad L\leq X \leq U,\label{eq:optcond-compl-S}\\
  (s_k-l_k)(u_k-s_k)v_k = 0, ~ \forall k \in [q],\quad l \leq v \leq u \label{eq:optcond-compl-v}. 
\end{align}
\end{subequations}
\begin{remark}
	\eqref{eq:optcond-compl-S} and~\eqref{eq:optcond-compl-v} is derived from the optimality conditions for~\eqref{eq:sdp-general-p2}, namely from 
	\begin{equation}
		\begin{aligned}
			(X_{ij}-L_{ij})S_{L,ij} = 0, S_{L,ij} \geq 0, X_{ij} \geq L_{ij}, \forall i \in [n] ~\forall j \in [n],\\
			(U_{ij}-X_{ij})S_{U,ij} = 0, S_{U,ij} \geq 0,X_{ij} \leq U_{ij}, \forall i \in [n] ~\forall j \in [n],\\
			(s_{k}-l_{k})v_{l,k} = 0, v_{l,k} \geq 0, s_k \geq l_k, \forall k \in [q] ,\\
			(l_{k}-s_{k})v_{l,k} = 0, v_{l,k} \geq 0, s_k \leq u_k, \forall k \in [q] .\\
		\end{aligned}
	\end{equation}
With $S = S_L - S_U$ and $v = v_l - v_u$, we obtain~\eqref{eq:optcond-compl-S} and~\eqref{eq:optcond-compl-v}.
\end{remark}

We solve problem~\eqref{eq:augL} coordinatewise, i.e., we optimize only over a block of variables at a time while keeping all other variables fixed. The procedure is outlined in Algorithm~\ref{alg:eADMM}.

\begin{algorithm} \label{alg:eADMM}
	\SetAlgoLined
 	\nonl Initialization: Select $\sigma^k > 0$, $\varepsilon_{tol} >0 $; 	$k = 0,~X_0 = 0,~Z_0 = 0,~S_0 = 0$\;
	\nonl \While{~$\max \{\varepsilon_{pc},\varepsilon_{dc},\varepsilon_{opt_d},\varepsilon_{opt_p},\varepsilon_{pb} \} > \varepsilon_{tol}$}	
	{  
		$	(y^{k+1},\bar{y}^{k+1}) = \argmin_{(y,\bar{y})} -b^{\top}y-s^\top\bar{y}  + \langle \mathcal{A}^*y +\mathcal{B}^*\bar{y}, X^k\rangle $
		$+ \frac{\sigma^k}{2} \|\mathcal{A}^*y+\mathcal{B}^*\bar{y}+S^{k}+Z^{k}-C\|^2 + \frac{\sigma^k}{2}\|v^k-\bar{y}\|^2$\label{step:solve_y};

	$S^{k+1}  =  \argmin_{S} - \mathcal{F}_1(S)  + \langle X^k ,S\rangle  $
	$+  \frac{\sigma^k}{2}\| \mathcal{A}^*y^{k+1} +\mathcal{B}^*\bar{y}^{k+1} +S +Z^k-C \|^2_{F}$ \label{step:solve_S}\;

	$	Z^{k+1} = \argmin_{Z}  \langle X^k ,Z\rangle +  \frac{\sigma^k}{2}\| \mathcal{A}^*y^{k+1} +\mathcal{B}^*\bar{y}^{k+1} +S^{k+1} +Z-C \|^2_{F} $,
	$v^{k+1} =  \argmin_{v} - \mathcal{F}_2(v) + v^\top s^k + \frac{\sigma^k}{2}\|v-\bar{y}^{k+1}\|^2 $\label{step:solve_Z} \;
	 
	$X^{k+1} = X^k +\sigma^{k}(\mathcal{A}^*y^{k+1}+\mathcal{B}^*\bar{y}^{k+1}+S^{k+1}+Z^{k+1}-C)$, 
	$s^{k+1}= s^k + \sigma^{k}(v^{k+1}-\bar{y}^{k+1})$\label{step:solve_primal}  \;
	Update infeasibilities $ \varepsilon_{dc}$, $\varepsilon_{pc}$, 	$\varepsilon_{pb}$ ,$\varepsilon_{opt_v}$, $\varepsilon_ {opt_m}$\;
	Tune stepsize and obtain $\sigma^{k+1}$ \;
	$k\leftarrow k + 1$\;
}
\nonl	where $\varepsilon_{dc}: = \frac {\|\mathcal{A}^*y + \mathcal{B}^*\bar{y} + Z + S - C \|_F}{1+\|C\|_F} +\frac{\|-\bar{y} + v \|}{1+\|y\|}$ , $\varepsilon_{pc} :=\frac{ \|\mathcal{A}(X) -b \|}{1+\|b\|} +\frac{\|\mathcal{B}(X) - s \|}{1+\|s\|}$, $\varepsilon_{pb}:= \frac{\|X-\mathcal{P}_{[L,U]}(X)\|_F}{1+\|X\|_F}$, $\varepsilon_ {opt_m} := \frac{\|X -\mathcal{P}_{[L,U]}(X - S)\|_F}{1 + \| X\|_F + \|S\|_F}$, $\varepsilon_{opt_v}: = \frac{\|v- \mathcal{P}_{[l,u]}(v-s)\|}{1+\|v\| +\|s\|}$.
	\caption{Extended ADMM for problem \rew{\eqref{eq:sdp-general-d}}}
\end{algorithm}
In Step~\ref{step:solve_y}, the minimization over $(y, \bar{y})$, we force the first order optimality conditions to hold, i.e., we set the gradient with respect to $(y, \bar{y})$ to zero and thereby obtain the explicit expression
\begin{equation}\label{eq:lin-y}
  \begin{aligned}
  	\begin{pmatrix}
  	y^{k+1}\\
  	\bar{y}^{k+1}
  	\end{pmatrix} =\begin{pmatrix}
  	\mathcal{A}\mathcal{A}^* & \mathcal{A}\mathcal{B}^* \\
  	\mathcal{B}\mathcal{A}^* & \mathcal{B}\mathcal{B}^* + I \end{pmatrix}^{-1}
  	 \begin{pmatrix} 
  \frac{b}{\sigma^{k}}  - \mathcal{A}(S^k+Z^k-C + \frac{1}{\sigma^{k}}X^k) \\ -\mathcal{B}(S^k+Z^k-C+\frac{1}{\sigma^{k}}X^k) + v^k +\frac{1}{\sigma^{k}} s^k   \end{pmatrix}.
\end{aligned}
\end{equation}

\rew{Note that the size of $y^k$ is the number of equality constraints and the size of $\bar{y}^k$ the number of inequality constraints.
  By abuse of notation we write $\mathcal{A}\mathcal{A}^*$ for the matrix product formed by the system matrix underlying the operator $\mathcal{A}(\cdot)$.
Similarly for $\mathcal{B}\mathcal{A}^*$, $\mathcal{B}\mathcal{B}^*$.}

\rew{In practice, we solve~\eqref{eq:lin-y} in the following way. First, we apply the Cholesky decomposition 
\begin{equation}
	RR^\top = \begin{pmatrix}
	\mathcal{A}\mathcal{A}^* & \mathcal{A}\mathcal{B}^* \\
	\mathcal{B}\mathcal{A}^* & \mathcal{B}\mathcal{B}^* + I \end{pmatrix} = : Q.
\end{equation}
Since $\mathcal{A}$ and $\mathcal{B}$ are row independent, the Cholesky decomposition exists. Moreover, the Cholesky decomposition only needs to be computed once since matrix $Q$ remains the same in all iterations.
Then, we update $(y, \tilde{y})$ as 
\begin{equation}
	 \begin{aligned} RR^\top
	\begin{pmatrix}
	y^{k+1}\\
	\bar{y}^{k+1}
      \end{pmatrix} =
      rhs
      \end{aligned}
      \end{equation}
by solving two systems of equations subsequently, i.e., $R \mathbf{x} = rhs$ and then solve the system $R^\top \mathbf{y} = \mathbf{x}$ and thereby having solved $RR^\top \mathbf{y} = rhs$.
}

In Step~\ref{step:solve_S}, the minimization amounts to a projection onto the non-negative orthant.
\begin{equation}
\begin{aligned}		
S^{k+1} =&~ \argmin_{S}  -\mathcal{F}_1(S) + \langle X^k ,S\rangle +  
\frac{\sigma^k}{2}\| \mathcal{A}^*y^{k+1} +\mathcal{B}^*\bar{y}^{k+1} +S +Z^k-C \|^2_{F}, \\
S^{k+1}_{ij} =&~\begin{cases}
 \argmin_{S_{ij}\geq 0}  \| M^{k+1}_{ij}+S_{ij} -\frac{1}{\sigma^k} L_{ij} \|^2, S_{ij} \geq 0,\\
 - \argmin_{S_{ij}\leq 0}  \|M^{k+1}_{ij} -S_{ij}  + \frac{1}{\sigma^k} U_{ij} \|^2, S_{ij} \leq 0,
 \end{cases}\\
=&~\begin{cases}
\mathcal{P}_{\geq 0}(-M^{k+1}_{ij}+\frac{1}{\sigma^k} L_{ij}), S_{ij} \geq 0,\\
- \mathcal{P}_{\leq 0}(M^{k+1}_{ij} + \frac{1}{\sigma^k} U_{ij} ), S_{ij} \leq 0,
&~\end{cases} \\
=&~ \begin{cases}
 \frac{1}{\sigma^k}\mathcal{P}_{\geq L_{ij}} (\sigma^k M^{k+1}_{ij}) -M^{k+1}_{ij},S_{ij} \geq 0,\\
 \frac{1}{\sigma^k}\mathcal{P}_{\leq U_{ij}} (\sigma^k M^{k+1}_{ij}) -M^{k+1}_{ij}, S_{ij} \leq  0,
\end{cases}\\
= &\frac{1}{\sigma^k}\mathcal{P}_{[L_{ij},U_{ij}]} (\sigma^k M^{k+1}_{ij}) -M^{k+1}_{ij},
\end{aligned}
\end{equation}
where $M^{k+1}:= \mathcal{A}^*y^{k+1} + \mathcal{B}^*\bar{y}^{k+1} + Z^k + \frac{1}{\sigma^k} X^k -C$. Hence,
\begin{equation}
S^{k+1}= \frac{1}{\sigma^k}\mathcal{P}_{[L,U]} (\sigma^{k} M^{k+1}) -M^{k+1}.
\end{equation}

Similarly, in Step~\ref{step:solve_Z} for $v^{k+1}$ we have
\begin{equation}
	 v^{k+1} =  \frac{1}{\sigma^k} \mathcal{P}_{[l,u]}(\sigma^k\bar{y}^{k+1} -  s^k)  -(\bar{y}^{k+1} - \frac{1}{\sigma^k} s^{k})
\end{equation}

In Step~\ref{step:solve_Z}, for $Z^{k+1}\succeq 0$, the minimizer is found via a projection onto the cone of positive semidefinite matrices
\begin{equation}
	\begin{aligned}
		Z^{k+1} =&~ \argmin_{Z\succeq 0 } \langle X^k ,Z\rangle +  \frac{\sigma^k}{2}\| \mathcal{A}^*y^{k+1} +\mathcal{B}^*\bar{y}^{k+1} +S^{k+1} +Z-C \|^2_{F}\\
		=&~	\argmin_{Z\succeq 0 } \| \mathcal{A}^*y^{k+1} +\mathcal{B}^*\bar{y}^{k+1} +S^{k+1} +Z + \frac{1}{\sigma^k} X^k-C \|^2_{F}\\
		=&~ - \mathcal{P}_{\preceq 0} (N^{k+1}),
	\end{aligned}
\end{equation}
where $N^{k+1}:= \mathcal{A}^*y^{k+1} +\mathcal{B}^*\bar{y}^{k+1} +S^{k+1} + \frac{1}{\sigma^k}X^k-C $.

Finally, by substituting $Z^{k+1}$ and $v^{k+1}$ into Step~\ref{step:solve_primal} we obtain  
\begin{equation}
\begin{aligned}
X^{k+1} &=\sigma^k \mathcal{P}_{\succeq 0} (N^{k+1}),\\
s^{k+1} &=   \mathcal{P}_{[l,u]}(\sigma^k\bar{y}^{k+1} -  s^k).
\end{aligned}
\end{equation}
\begin{remark}
  Throughout Algorithm~\ref{alg:eADMM}, the complementary slackness condition for $(X^{k+1},Z^{k+1})$ holds.
  This is since $\frac{1}{\sigma^k}X^{k+1}$ is the projection onto the positive semidefinite cone of matrix $N^{k+1}$, while $-Z^{k+1}$ is a projection onto the negative semidefinite cone of the same  matrix $N^{k+1}$.
\end{remark}

\subsection{Stepsize adjustment}

Previous numerical results showed that the practical performance of an ADMM is strongly influenced by the stepsize $\sigma$. The most common way is to adjust $\sigma$ to balance primal and dual infeasibilities: if $\varepsilon_p/ \varepsilon_d < c$ for some constant $c$, then increase $\sigma$; if $\varepsilon_p/ \varepsilon_d > \frac{1}{c}$, then decrease $\sigma$.

\citet{lorenz2018non} derived an adaptive stepsize for the Douglas-Rachford Splitting (DRS) scheme. In the setting of the 2-block ADMM, this translates to the ratio between norms of the primal and dual variables $\frac{\|X_k\|}{\|Z_k\|}$ in the $k$-th iteration. In general, for the 2-block ADMM this update rule yields a better performance than the former one.

In this paper, we use either of these update rules, depending on the type of problems we solve. For SDP problems with equations and nonnegativity constraints only, we apply the adaptive stepsize method from~\citet{lorenz2018non} since it works very well in practice.
However, the situation is different for SDP problems with inequalities different than nonnegativity constraints. In this case, we use the classic method to adjust the stepsize $\sigma$ according to the ratio between the primal and dual infeasibilities.

\section{Lower bound post-processing algorithms} \label{sec:post-processing}

We relax the original graph partition problem to an SDP problem, thereby generating a lower bound on the original problem.
However, when solving the SDP by a first-order method, it is hard to reach a solution to high precision in reasonable computational time. Therefore, we stop the ADMM already when a medium precision is reached. In this way, however, the solution obtained by the ADMM is not always a safe underestimate for the optimal solution of the SDP problem. Hence, we need a post-processing algorithm that produces a safe underestimate for the SDP relaxation, which is then also a lower bound for the graph partition problem.

Theorem~\ref{the:lb_sdp_gen} leads to the first post-processing algorithm. 
Before stating it, we rewrite Lemma~3.1 from~\cite{jansson2007rigorous} in our context.

\begin{lemma} \label{lemma1}
Let $X, Z \in \mathcal{S}_n$, and  $  0 \leq \lambda(X)\leq \bar{x} $, where $\lambda(\cdot)$ indicates the operation of getting the eigenvalues of the respective matrix. Then
\begin{equation*}
    \langle X, Z \rangle \geq \sum_{\lambda(Z) < 0} \bar{x} \cdot \lambda(Z).
\end{equation*}
\end{lemma}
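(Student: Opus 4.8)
The plan is to diagonalize $X$ via an orthogonal eigendecomposition and use the invariance of the trace inner product under this change of basis to reduce the claim to a componentwise estimate. Write $X = Q \Lambda Q^\top$ with $Q$ orthogonal and $\Lambda = \mathrm{diag}(\lambda_1(X), \dots, \lambda_n(X))$, where by hypothesis $0 \le \lambda_i(X) \le \bar x$ for all $i$. Then $\langle X, Z\rangle = \langle Q\Lambda Q^\top, Z\rangle = \langle \Lambda, Q^\top Z Q\rangle$, and if we set $\tilde Z := Q^\top Z Q$ this equals $\sum_{i=1}^n \lambda_i(X)\, \tilde Z_{ii}$, i.e. only the diagonal entries of $\tilde Z$ matter.

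The next step is to bound each term $\lambda_i(X)\,\tilde Z_{ii}$ from below. Since $\lambda_i(X) \ge 0$, when $\tilde Z_{ii} \ge 0$ the term is nonnegative and we simply discard it (bounding it below by $0$); when $\tilde Z_{ii} < 0$, the factor $\lambda_i(X)\in[0,\bar x]$ multiplying a negative number gives $\lambda_i(X)\tilde Z_{ii} \ge \bar x\, \tilde Z_{ii}$. Summing over $i$ yields $\langle X,Z\rangle \ge \sum_{\tilde Z_{ii} < 0} \bar x\, \tilde Z_{ii} \ge \bar x \sum_{\tilde Z_{ii} < 0 \text{ or } \tilde Z_{ii} \ge 0, \text{discarded}} \dots$; more precisely $\langle X, Z\rangle \ge \bar x \sum_{i: \tilde Z_{ii}<0} \tilde Z_{ii}$.

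The remaining point — which is the only genuinely nontrivial step — is to relate $\sum_{i:\tilde Z_{ii}<0}\tilde Z_{ii}$, a sum over negative \emph{diagonal entries in the eigenbasis of $X$}, to $\sum_{\lambda(Z)<0}\lambda(Z)$, the sum of the negative \emph{eigenvalues of $Z$}. The key fact here is that for a symmetric matrix, the sum of the negative eigenvalues is the minimum over all subsets $I \subseteq [n]$ of $\sum_{i\in I} (U^\top Z U)_{ii}$ taken over orthogonal $U$ — equivalently, $\sum_{\lambda(Z)<0}\lambda(Z) \le \sum_{i\in I}(Q^\top Z Q)_{ii}$ for every index set $I$ and in particular for $I = \{i : \tilde Z_{ii} < 0\}$. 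This follows from a Cauchy-interlacing / Ky Fan type argument: $\sum_{i\in I}\tilde Z_{ii} = \langle P, \tilde Z\rangle$ where $P$ is the diagonal projection onto coordinates $I$, and $\langle P, Q^\top Z Q\rangle = \langle QPQ^\top, Z\rangle \ge \min\{\langle \Pi, Z\rangle : \Pi \text{ an orthogonal projection}\} = \sum_{\lambda(Z)<0}\lambda(Z)$, the last equality being the variational characterization of the sum of negative eigenvalues. Since $\bar x \ge 0$, multiplying this inequality through by $\bar x$ preserves its direction, and combining with the bound from the previous paragraph gives $\langle X, Z\rangle \ge \bar x \sum_{i:\tilde Z_{ii}<0}\tilde Z_{ii} \ge \bar x \sum_{\lambda(Z)<0}\lambda(Z)$, which is the claim. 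I expect the main obstacle to be stating this last variational fact cleanly; since the result is quoted from \cite{jansson2007rigorous}, it may suffice to cite it, but a self-contained two-line argument via the projection characterization is also available.
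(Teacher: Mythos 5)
Your proof is correct, but it is not the argument behind the paper's statement: the paper gives no proof at all, quoting the result as Lemma~3.1 of the cited reference (Jansson et al.), and the proof there (the natural one) diagonalizes $Z$ rather than $X$. Writing $Z=\sum_j \lambda_j(Z)\,u_ju_j^\top$ gives $\langle X,Z\rangle=\sum_j \lambda_j(Z)\,u_j^\top X u_j$, and since $0\le u_j^\top X u_j\le \bar{x}$ for unit vectors $u_j$ (because $0\le\lambda(X)\le\bar{x}$), dropping the nonnegative terms and bounding the negative ones yields $\langle X,Z\rangle\ge \sum_{\lambda_j(Z)<0}\bar{x}\,\lambda_j(Z)$ in one line, with no further spectral machinery. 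You instead diagonalize $X$, reduce to $\sum_i \lambda_i(X)\tilde Z_{ii}$ with $\tilde Z=Q^\top Z Q$, bound termwise to get $\langle X,Z\rangle\ge \bar{x}\sum_{i:\tilde Z_{ii}<0}\tilde Z_{ii}$, and then must invoke the genuinely extra fact that the sum of the negative diagonal entries of $Z$ in any orthonormal basis dominates the sum of its negative eigenvalues; your justification of this via $\sum_{i\in I}\tilde Z_{ii}=\langle QPQ^\top,Z\rangle=\sum_j\lambda_j(Z)\,u_j^\top QPQ^\top u_j\ge\sum_{\lambda_j(Z)<0}\lambda_j(Z)$ (the coefficients lying in $[0,1]$) is sound, and the prefactor $\bar x\ge 0$ is available from the hypothesis, so the chain of inequalities closes. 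The only blemish is the garbled sentence in your second paragraph (``$\ge \bar x\sum_{\tilde Z_{ii}<0\text{ or }\dots}$''), which should simply read $\langle X,Z\rangle\ge\bar{x}\sum_{i:\tilde Z_{ii}<0}\tilde Z_{ii}$. In short: your route is valid but pays for diagonalizing the wrong matrix with a Ky Fan--type variational step that the $Z$-eigendecomposition argument avoids entirely.
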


%


\begin{theorem} \label{the:lb_sdp_gen}
	Given $Z\in \sn $, $y \in \R^m$, $\tilde{y} \in \R^q $, $\tilde{v} \in \mathbb{R}^q$, $\tilde{S} \in \sn$, and let $X  \in \sn_+ $ be an optimal solution for \eqref{eq:sdp-general-p} and $\bar{x} \geq \lambda_{\max}(X)$, then we have a safe lower bound for the optimal value $p^*$
	\begin{equation}\label{eq:rigorous_lb_general}
	 \textrm{lb}:= b^\top \tilde{y} + \mathcal{F}_1(\tilde{S}) + \mathcal{F}_2(\tilde{v})  + \sum_{\lambda(Z) <0} \bar{x} \lambda(Z).
	\end{equation}
\end{theorem}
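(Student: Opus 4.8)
The plan is to produce a safe lower bound by constructing a (possibly infeasible) dual point and correcting for its infeasibility using Lemma~\ref{lemma1}. Given the candidate dual vectors $\tilde{y}, \tilde{y}, \tilde{v}, \tilde{S}$ (which need not be dual feasible), I would first define the residual matrix
\[
  Z := C - \mathcal{A}^*\tilde{y} - \mathcal{B}^*\tilde{y} - \tilde{S},
\]
so that the dual equality constraint $\mathcal{A}^*\tilde{y} + \mathcal{B}^*\tilde{y} + \tilde{S} + Z = C$ holds by construction, but $Z$ is \emph{not} assumed to be positive semidefinite. (I would also, for the bound on the $v$-part, implicitly take $\tilde{v} = \tilde{y}$ or absorb any mismatch into $\mathcal{F}_2$; since the theorem only asserts a valid lower bound, a mismatch here can only be handled if $\mathcal{F}_2$ is evaluated at the given $\tilde{v}$, so I would check that weak duality still goes through entrywise.)

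Next I would run the weak-duality computation by hand. Let $X \in \mathcal{S}_+^n$ be optimal for \eqref{eq:sdp-general-p}, so $p^* = \langle C, X\rangle$ and $X$ satisfies $\mathcal{A}(X) = b$, $\mathcal{B}(X) = s$, $L \le X \le U$, $l \le s \le u$. Substituting $C = \mathcal{A}^*\tilde{y} + \mathcal{B}^*\tilde{y} + \tilde{S} + Z$ and using the adjoint identities $\langle \mathcal{A}^*\tilde{y}, X\rangle = \tilde{y}^\top \mathcal{A}(X) = b^\top \tilde{y}$ and $\langle \mathcal{B}^*\tilde{y}, X\rangle = \tilde{y}^\top s$, I get
\[
  p^* = b^\top \tilde{y} + \tilde{y}^\top s + \langle \tilde{S}, X\rangle + \langle Z, X\rangle.
\]
Now I bound the last three terms from below separately. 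For $\langle \tilde{S}, X\rangle$: since $L \le X \le U$ entrywise, $\langle \tilde{S}, X\rangle \ge \inf_{L \le W \le U}\langle \tilde{S}, W\rangle = \mathcal{F}_1(\tilde{S})$. For the $s$-term, combined with whatever $\tilde{v}$-term is available: using $l \le s \le u$, $\tilde{v}^\top s \ge \inf_{l\le \omega\le u}\langle \tilde{v}, \omega\rangle = \mathcal{F}_2(\tilde{v})$, provided the coefficient multiplying $s$ is exactly $\tilde{v}$; here I would need to argue that $\tilde{y} = \tilde{v}$ (or that any discrepancy term is nonnegative on the feasible set), mirroring the $\bar{y} = v$ constraint in \eqref{eq:sdp-general-d} — this is the one place where a hypothesis on the relationship between $\tilde y$ and $\tilde v$ must be invoked or the discrepancy must be shown harmless. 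Finally, for $\langle Z, X\rangle$: since $X \succeq 0$ and $0 \le \lambda(X) \le \bar x$ by the assumption $\bar x \ge \lambda_{\max}(X)$, Lemma~\ref{lemma1} gives $\langle Z, X\rangle \ge \sum_{\lambda(Z) < 0} \bar x\,\lambda(Z)$. Adding these three inequalities yields exactly $p^* \ge \mathrm{lb}$ as in \eqref{eq:rigorous_lb_general}.

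The main obstacle I anticipate is the bookkeeping around the inequality block: making sure the term $\tilde{y}^\top s$ (coming from $\mathcal{B}^*\tilde{y}$) is correctly matched with $\mathcal{F}_2(\tilde{v})$, i.e., that the proof really only needs $\tilde{v}$ through the definition of $\mathcal{F}_2$ and that no sign error creeps in when $s$ is only constrained to a box rather than a single value. A secondary point of care is the direction of Lemma~\ref{lemma1}: it requires the upper eigenvalue bound $\bar x$ on $X$, which is exactly why the hypothesis $\bar x \ge \lambda_{\max}(X)$ appears, and one should note that only the \emph{negative} eigenvalues of $Z$ contribute, so an approximately-feasible dual point (with $Z$ nearly positive semidefinite) gives a bound close to the true dual objective. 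Everything else — the adjoint manipulations and the two box-infimum bounds — is routine once the substitution $C = \mathcal{A}^*\tilde{y} + \mathcal{B}^*\tilde{y} + \tilde{S} + Z$ is in place.
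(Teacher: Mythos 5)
Your proposal is correct and takes essentially the same route as the paper: define $Z$ as the residual of the dual equality constraint (with $\tilde v$ playing the role of the multiplier of $\mathcal{B}$, exactly the point you flag via the $\bar y = v$ constraint), use primal feasibility of $X$ to bound the box terms from below by $\mathcal{F}_1(\tilde S)$ and $\mathcal{F}_2(\tilde v)$, and correct for $Z\not\succeq 0$ with Lemma~\ref{lemma1} and the eigenvalue bound $\bar x$. The only cosmetic difference is that the paper runs the argument through the split nonnegative dual variables $S_L,S_U,v_l,v_u$ of~\eqref{eq:sdp-general-d2} and then recombines them into $\mathcal{F}_1,\mathcal{F}_2$, whereas you work with the combined quantities directly.
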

\begin{proof}
We recall the alternative formulation of~\eqref{eq:sdp-general-p}, 
\begin{equation}\tag{\ref{eq:sdp-general-p2}}
\begin{aligned} 
    \min~&\langle C, X \rangle\\
          \textrm{s.t.}~&   \mathcal{A}(X) = b,\\
                        &   \mathcal{B}(X) -s = 0,\\
                        &   X \succeq 0,\\
                        &   X \geq L,\\
                        &  -X \geq -U,\\
                        &   s \geq l,\\
                        &   -s \geq -u,\\
\end{aligned}
\end{equation}
and the corresponding dual problem
    \begin{equation}\tag{\ref{eq:sdp-general-d2}}
        \begin{aligned}
        \max \qquad & b^\top y +\mathbf{0}_q^\top \bar{y} +\langle \mathbf{0}_{n\times n}, Z \rangle + \langle L, S_L \rangle -\langle U,S_U  \rangle+ l^\top v_l - u^\top v_u \\
         \textrm{s.t. } \qquad      & \mathcal{A}^*y + \mathcal{B}^* \bar{y} + Z + S_L - S_U = C,\\
                & -\bar{y} + v_l - v_u = 0,\\
                & Z \succeq 0, \\
                & S_L, S_U , v_l, v_u \geq 0.
        \end{aligned}
    \end{equation}
Given an optimal solution $X^*$ from \eqref{eq:sdp-general-p2} and the free variable $\tilde{y}$ and nonnegative variables $(\tilde{v}_l,\tilde{v}_u,\tilde{S}_L, \tilde{S}_U )$,  we define $Z: = C - \mathcal{A}^*\tilde{y}- \mathcal{B}^*\tilde{v}_l + \mathcal{B}^* \tilde{v}_u  - S_L + S_U$ and have
\begin{subequations}
\begin{align}
    & \langle C,X^*\rangle - (b^\top \tilde{y} +  l^\top \tilde{v}_l - u^\top \tilde{v}_u+ \langle L, \tilde{S}_L \rangle -\langle  U, \tilde{S}_U \rangle),\\
    =&~ \langle C,X^*\rangle - \langle \mathcal{A}^*\tilde{y},X^*\rangle - (l^\top \tilde{v}_l - u^\top \tilde{v}_u+  \langle L, \tilde{S}_L\rangle - \langle U,\tilde{S}_U\rangle),\\
    \geq&~ \langle C,X^*\rangle - \langle \mathcal{A}^*\tilde{y},X^*\rangle -  \langle \mathcal{B}^*\tilde{v}_l,X^* \rangle +  \langle \mathcal{B}^*\tilde{v}_u,X^* \rangle - \langle X, \tilde{S}_L \rangle + \langle U, \tilde{S}_U\rangle, \label{ineq:lb-general}\\
    =&~ \langle C - \mathcal{A}^*\tilde{y}- \mathcal{B}^*\tilde{v}_l + \mathcal{B}^* \tilde{v}_u  - S_L + S_U, X^*\rangle,\\
    =&~ \langle Z, X^*\rangle,\\
    \geq & \sum_{\lambda(Z) <0} \bar{x} \lambda(Z),
\end{align}
\end{subequations}
where inequality~\eqref{ineq:lb-general} holds because $\tilde{v}_l,\tilde{v}_u,\tilde{S}_U$ and $\tilde{S}_L$ are nonnegative. 
This gives us a lower bound for the problem~\eqref{eq:sdp-general-p2} as 
\begin{equation} \label{lb-general-sdp}
    \textrm{lb}:= b^\top \tilde{y} + l^\top \tilde{v}_l - u^\top \tilde{v}_u+ \langle L, \tilde{S}_L \rangle - U\cdot \tilde{S}_U + \sum_{\lambda(Z) <0} \bar{x} \lambda(Z).
\end{equation}
On substituting $S:= S_L-S_U$ and $v: = v_l- v_u$ into the objective function we have 
  \begin{equation}
  \begin{aligned}
  \langle L, S_L \rangle -\langle U, S_U \rangle  = \inf_{W}\{ \langle S, W\rangle \mid L\leq W \leq U \},\\
  l^\top v_l - u^\top v_u  = \inf_{\omega} \{ \langle v, \omega \rangle \mid l \leq \omega \leq u \}.
  \end{aligned}
  \end{equation}
Consequently, we can rewrite~\eqref{lb-general-sdp} as 
\begin{equation}\tag{\ref{eq:rigorous_lb_general}}
\textrm{lb}:= b^\top \tilde{y} + \mathcal{F}_1(\tilde{S}) + \mathcal{F}_2(\tilde{v})  + \sum_{\lambda(Z) <0} \bar{x} \lambda(Z).
\end{equation}
\end{proof}

For specifically structured SDP problems, a value of  $\bar{x}$ might be known.
Otherwise, without any information about an upper bound $\bar{x}$ in \eqref{eq:rigorous_lb_general} for the maximal eigenvalue $\lambda_{\max}(X)$, we approximate $\bar{x}$ as $\lambda_{\max}(\tilde{X})$ where the output from the extended ADMM is $(\tilde{X},\tilde{y}, \tilde{v},\tilde{S})$. Then, we scale it with $\mu > 1$, \rew{e.g., $\mu = 1.1$,} to have a safe bound $\mu \bar{x}$.
\rew{Note that this requires that the solution of the extended ADMM, i.e., Algorithm~\ref{alg:eADMM}, is satisfied with reasonable accuracy, say $\varepsilon = 10^{-5}$.}

The complete post-processing algorithm is summarized in Algorithm~\ref{alg:lb-eADMM}.

\begin{algorithm} \label{alg:lb-eADMM}
\SetAlgoLined
\KwIn{Data $P = (\mathcal{A},\mathcal{B}, b,l,u,L,U, C)$, approximate primal and dual optimal solution $(\tilde{X},\tilde{y},\tilde{v},\tilde{S})$ for $P$ and \rew{${\bar x}$ with $\max (\lambda(\tilde{X})) \le {\bar x}$.}}
\KwOut{Lower bound $d^*$}
Compute $d_0^* := b^{\top}\tilde{y}+\mathcal{F}_1(\tilde{S}) + \mathcal{F}_2(\tilde{v})$ \;
\eIf{$\min(\lambda(Z))< 0$} 
{ perturbation = ${\bar x} \cdot \sum_{\lambda(Z) < 0} \lambda(Z)$\;}
{perturbation = 0\;}
$d^* := d_0^* + \text{perturbation}$\; \caption{Rigorous lower bound for \eqref{eq:sdp-general-p}}
where $\mathcal{F}_1(S) = \inf_{W} \{ \langle S,W \rangle \mid  L \leq W \leq U \}$, $\mathcal{F}_2(v) = \inf_{\omega} \{ \langle v,\omega \rangle \mid l \leq \omega \leq u\}$.
\end{algorithm}
As for the $k$-equipartition problem \eqref{eq:keq-dnn}, we have $X \preceq m\cdot I$ for any feasible solution $X$. Hence, we let $\bar{x} = m$ when applying post-processing Algorithm~\ref{alg:lb-eADMM} for $k$-equipartition problems.
\rew{As for the GPKC, we have no value $\bar{x}$ at hand.}

Another way to get a safe lower bound for \eqref{eq:sdp-general-p} is to tune the output results and get a feasible solution for its dual problem \eqref{eq:sdp-general-d}.
This is outlined as Algorithm~\ref{alg:post-process3}.
The brief idea is to build a feasible solution $(y_{new},v_{new},Z_{new},S_{new})$ from an approximate solution $(\tilde{y},\tilde{v},\tilde{Z},\tilde{S})$. To guarantee feasibility of $(y_{new},v_{new}$, $Z_{new},S_{new})$, we first get a $Z_{new}$ by projecting $\tilde{Z}$ on the cone of positive semidefinite matrices. We then keep $Z_{new}$ fixed and hence have a linear problem. The final step is to find the optimal solution for this linear programming problem. 

In Algorithm~\ref{alg:eADMM}, the condition $Z \succeq 0$ is guaranteed by the projection operation onto the cone of positive semidefinite matrices. Hence, we can skip Step~\ref{eq:Zcheck} in Algorithm~\ref{alg:post-process3}.

We would like to remark that the linear program can be infeasible, but this algorithm works well when the input solution has a good precision.
The comparisons of numerical results of these two post processing algorithms are given in Section~\ref{sec:numerical-post-proc}.

\begin{algorithm}  \label{alg:post-process3}
\SetAlgoLined
\KwIn{Data $P = (\mathcal{A},\mathcal{B}, b,l,u,L,U, C)$, approximate primal and dual optimal solution $\tilde{Z}$ for $P$.}
\KwOut{Lower bound $d^*$}
Update $\tilde{Z}$: $\tilde{Z}\rightarrow \mathcal{P}_{\succeq 0} (\tilde{Z})$ \label{eq:Zcheck}\;
\eIf{
	LP problem 
\begin{equation*}
\begin{aligned}
LP(P) :=\max_{v_l,v_u,S_L,S_U \geq 0,y} \{b^\top y + l^\top v_l -u^\top v_u + \langle L, S_L \rangle - \langle U, S_U \rangle\\
 \mid \mathcal{A}^*y + \mathcal{B}^*v_l-\mathcal{B}^*v_u + S_L - S_U = C-\tilde{Z} \}
 \end{aligned}
\end{equation*} is feasible\;}
{return lower bound $d^* = LP(P)$\;}{return $d^* = - \infty$\;}
\caption{Adjusted lower bound for \eqref{eq:sdp-general-p2}}
\end{algorithm}

\section{Building upper bounds from the SDP solutions}\label{sec:up_bound}

Computing upper bounds of a minimization problem is typically done via finding feasible solutions of the original problem by heuristics.

A $k$-equipartition problem can be transformed into a quadratic assignment problem (QAP), and we can find feasible solutions for a QAP by simulated annealing (SA), see, e.g., \cite{sotirov2012sdp}. 
However, this method comes with a high computational expense for large graphs. Moreover, it cannot be generalized to GPKC problems.

Here we consider building upper bounds from the optimizer of the SDP relaxations. We apply different rounding strategies to the solution $X$ of the SDP relaxations presented in Section~\ref{sec:gpp}.

\subsection{Randomized algorithm for $k$-equipartition}

The first heuristic is a hyperplane rounding algorithm that is inspired by the Goemans and Williamson algorithm for the max-cut problem \citep{goemans1995improved} and \citet{frieze1995improved}'s improved randomized rounding algorithm  for $k$-cut problems.

Note that the Goemans and Williamson algorithm as well as the Frieze and Jerrum algorithm are designed for cut-problems formed as models on variables in $\{-1/(k-1),1\}^n$, while our graph partition problems are formed on $\{0,1\}^n$. Therefore, we need to transform the SDP solutions of problems~\eqref{eq:keq-dnn} and~\eqref{eq:gpkc-dnn} before applying the hyperplane rounding procedure.
Our hyperplane rounding algorithm for $k$-equipartition is given in Algorithm~\ref{alg:rand-keq}.
\begin{algorithm} \label{alg:rand-keq}
\SetAlgoLined
\KwData{number of partitions $k$, cluster cardinality $m$, number of sampling $M$, objective matrix $C$;}

\KwIn{Optimal solution $X \in \mathcal{S}^n$ from the SDP relaxation;}
\KwOut{$X^*  \in \{0,1\}^{n\times n}$, partition $\mathcal{P}:= \{P_t\mid t=1,\dots,k\}$, $b^*_{up}$\;}
    Initialization: $P_t \leftarrow \emptyset~ \forall t=1,\dots,k$, $b_{up}^* = + \infty$\;
    Transformation: $X \leftarrow (k X-ee^\top)/(k-1)$\;
    Get $V$ by matrix decomposition such that $V V^\top = X$\;
    \For{$\textrm{iter} =1,\dots,M$}
    {$r:=[r_{ij}] \sim U(0,1)~,\forall i\in [n]~ j \in [k]$ \;
    \For{ $t =1,\dots,k$}
    {	$P_t \leftarrow \textrm{argmaxk}_{i\in[n]/ \cup_{S\in \mathcal{P}} S} (v_i^\top r_{t},m) $\;
    \For{ $i \in P_t$}{
     $$X'_{ij}= \begin{cases}1, &j \in P_t, \\0, &j \notin P_t; \end{cases},~X'_{ji}= \begin{cases}1, &j \in P_t, \\0, &j \notin P_t. \end{cases}$$
     }}
	$b'_{up} = \langle C,X' \rangle $\;
	\If{ $b'_{up} < b^*_{up}$ }
	{ $X^* \leftarrow X'$\;
		$b^*_{up} \leftarrow b'_{up}$\;}
	}
 where $\textrm{argmaxk}_{i\in I}(a_i,s)$ returns the index set of $s$ largest elements in~$a_i$, $\forall i \in I$.
    \caption{Hyperplane rounding algorithm (Hyp) for $k$-equipartition problem}
\end{algorithm}


\subsection{Vector clustering algorithm for $k$-equipartition}
We next propose a heuristic via the idea of vector clustering. Given a feasible solution $X$ of \eqref{eq:keq-dnn}, we can get $V \in \R^{n\times n}$ with $V V^\top=X$. Let $v_i$ be the $i$-th row of $V$ and associate it with vertex $i$ in the graph. The problem of building a feasible solution from $X$ can then be interpreted as the problem of clustering vectors $v_1, \dots, v_n$ into $k$ groups. 
This can be done heuristically as follows.
\begin{enumerate}
    \item Form a new group with an unassigned vector.
    \item Select its $m-1$ closest unassigned neighbors and add them in the same group.
    \item Update the status of those vectors as assigned.
\end{enumerate}
This process is repeated $k-1$ times until all vectors are assigned in a group, yielding a $k$-equipartition for the vertices in $V$. The details are given in Algorithm~\ref{alg:rounding-keq}.
\begin{algorithm} \label{alg:rounding-keq}
	\SetAlgoLined
	\KwData{number of partitions $k$, cluster cardinality $m$, maximum iteration number $M$;}
	\KwIn{SDP relaxation  optimal solution $X \in \mathcal{S}^n$;}
	\KwOut{$X^*$, partition $\mathcal{P}:= \{P_t \mid t=1,\dots,k\}$, $b^*_{up}$\;}
	Initialization: $P_t \leftarrow \emptyset~ \forall t=1,\dots,k$, $b_{up}^* = + \infty $ .\\
	\For{ iter $ =1,\cdots,M$}
	{
	
	\For{ $t= 1,\dots, k$}
    {
   	$ i = \textrm{random} \{i \in [n] \mid i \notin \cup_{S\in \mathcal{P}} S\}$,\\
     $P_t \leftarrow \{i\};$\\    
     $P_t \leftarrow$ $P_t \cup \textrm{argmaxk}_{j \in [n]/\cup_{S\in \mathcal{P}} S}(x_i^\top x_j,m-1) $ ;\\
     \For{ $i \in P_t$}{
     $$X'_{ij}= \begin{cases}1, &j \in P_t, \\0, &j \notin P_t; \end{cases},~X'_{ji}= \begin{cases}1, &j \in P_t, \\0, &j \notin P_t. \end{cases}$$
     }   
     }
 	$b'_{up} = \langle C,X' \rangle $\;
 	\If{ $b'_{up} < b^*_{up}$ }
 	{ $X^* \leftarrow X'$\;
 	$b^*_{up} \leftarrow b'_{up}$\;}
 }
     where $\textrm{argmaxk}_{i\in I}(a_i,s)$ returns the index set of $s$ largest elements in~$a_i$, $\forall i \in I$.
	\caption{Vector clustering algorithm (Vc) for $k$-equipartition problem }
\end{algorithm}

\subsubsection{Measure closeness between vertices}

We explain in this section how we determine the closest neighbor for a vector. The idea of vector clustering is to have vectors with more similarities in the same group. In our setting, we need a measure to define the similarity between two vectors according to the SDP solution.

For a pair of unit vectors $v_i$, $ v_j$, using the relationship $\cos \measuredangle (v_i,v_j)  = v_i^\top v_j $ one can measure the angle between $v_i$ and $v_j$.

By the setting $V V^\top = X$, we have for any $i \in [n]$
\begin{equation}
	x_i =
	\begin{pmatrix}
	v_i^\top v_1 \\ \vdots \\v_i^\top v_n
	\end{pmatrix}=
	\begin{pmatrix}
	\cos \measuredangle (v_i,v_1) \\ \vdots \\ \cos \measuredangle (v_i,v_n)
	\end{pmatrix},
\end{equation}
where $x_i$ is the $i$-th row vector in $X$.

Hence, $x_i$ consists of the cosines of the angle between $v_i$ and other vectors. 
We define $\textrm{sim}(v_i,v_j): = \sum_{k=1}^{n} \cos\measuredangle (v_i,v_k)   \cos\measuredangle (v_j,v_k)  = x_i^\top x_j$ and use this as a measure in Algorithm \ref{alg:rounding-keq}.
In other words, we measures the closeness between $v_i$ and $v_j$ by \rew{their geometric} relationships with other vectors.

In Algorithm~\ref{alg:rounding-keq}, we choose a vector as the center of its group and then find vectors surrounding it and assign them to this group.

In each iteration we randomly choose one vector to be the center.

\subsection{Vector clustering algorithms for GPKC}\label{sec:ub_gpkc}
Using similar ideas as in Algorithm~\ref{alg:rounding-keq}, we construct a rounding algorithm (see Algorithm~\ref{alg:rounding-gpkc1}) for GPKC as follows.
\begin{enumerate}
    \item In each iteration, randomly choose an unassigned vector $v_i$ to start with.
    \item Add vectors in the group of $v_i$ in the order according to $\textrm{sim}(v_i,v_j)$, $\forall j\neq i \in [n]$, until the capacity constraint is violated.
    \item If no more vector fits into the group, then this group is completed and we start forming a new group.
\end{enumerate}

\begin{algorithm}  \label{alg:rounding-gpkc1}
	\SetAlgoLined
	\KwData{vertex weight $a$, knapsack bound $W$, maximum iteration number $M$\;}
	\KwIn{SDP relaxation  optimal solution $X$;}
	\KwOut{$X^* \in \{0,1\}^{n \times n}$, partition  $\mathcal{P}:= \{P_t \mid t=1,\dots,n\}$, $b^*_{up}$;}
	Initialization:$P_t \leftarrow \emptyset~ \forall t=1,\dots,n$,  $V_0\leftarrow [n]$, $t \leftarrow 1$.\\
		\For{ iter $ =1,\cdots,M$}
	{
	\While{$V_0 \neq \emptyset$}{
		$i = \textrm{random} \{i \in [n] \mid i \notin \cup_{S\in \mathcal{P}} S \}$\;
     $P_t \leftarrow \{i\}$\;
     $w_t \leftarrow a_i$\;
     $w_0 \leftarrow 0$\;
     $I \leftarrow \{i \in [n]\mid i \notin \cup_{S\in \mathcal{P}} S\}$\;
     \For{iter $=1,\dots,|I|$}{
      $j \leftarrow \argmax_{j \in I}  \langle x_j,x_i\rangle$ \;
      $w_0 \leftarrow w_t + a_j$\;
      	$I \leftarrow I / \{j\}$\;
      \eIf{$w_0 \leq W$ }{$w_t \leftarrow w_0$\; 
      	$P_t \leftarrow P_t \cup \{ j\}$\;
      }{continue.}}
      $V_0 \leftarrow V_0/\cup_{S\in \mathcal{P}} S$\;
     \For{ $i \in P_t$}{
     $$X'_{ij}= \begin{cases}1, &j \in P_t, \\0, &j \notin P_t; \end{cases},~X'_{ji}= \begin{cases}1, &j \in P_t, \\0, &j \notin P_t. \end{cases}$$ 
     }
 	$t \leftarrow t +1$ \;
    }$b'_{up} = \langle C,X' \rangle $\;
\If{ $b'_{up} < b^*_{up}$ }
{ $X^* \leftarrow X'$\;
	$b^*_{up} \leftarrow b'_{up}$\;}
}
	\caption{Vector clustering algorithm (Vc) for GPKC problem}
\end{algorithm}

\subsection{2-opt for graph partition problems}

2-opt heuristics are used to boost solution qualities for various combinatorial problems, e.g., TSP~\citep{lin1965computer}. We apply this method after running our rounding algorithms for the graph partition problems to improve the upper bounds. According to the rounding method we choose, the hybrid strategies are named as Hyperplane+2opt (also short as Hyp+2opt) and Vc+2opt for Algorithms~\ref{alg:rand-keq} and~\ref{alg:rounding-keq}, respectively.

The 2-opt heuristic for bisection problems is outlined in Algorithm~\ref{alg:2opt_keq}.
Given a partition with more than two groups, we apply 2-opt on a pair of groups $(P_s,P_t)$, which is randomly chosen from all groups in the partition, and repeat it on a different pair of groups until no more improvement can be found.

For GPKC, some adjustments are needed because of the capacity constraints. We only traverse among swaps of vertices that still give feasible solutions to find the best swap that improves the objective function value.

\begin{algorithm} \label{alg:2opt_keq}
	\SetAlgoLined
	\KwData{Lapacian matrix $L$, threshold $\varepsilon$\;}
	\KwIn{A feasible bisection $\mathcal{P}_0=\{P_1,P_2\}$ for graph $G$\;}
	\KwOut{New bisection $\mathcal{P}^*$ \;}
	$(s,t)\leftarrow \argmax_{i \in P_1,j \in P_2} \sum_{k\neq i, k \in P_1}L_{jk} - \sum_{k\neq j, k \in P_2}L_{jk} + \sum_{ k\neq j, k\in P_2 }L_{ik}  -\sum_{k\neq i, k \in P_1}L_{ik}$ \;
	$\Delta_{cost} \leftarrow \sum_{k\neq s, k \in P_1}L_{tk} - \sum_{k\neq t, k \in P_2}L_{tk} + \sum_{ k\neq t, k\in P_2 }L_{sk}  -\sum_{k\neq s, k \in P_1}L_{sk}$\;
	\While{$\Delta_{cost} > \varepsilon $}{
	$P_1 \leftarrow P_1 -\{s\} +\{t\}  $\;
	$P_2 \leftarrow P_2 -\{t\} +\{s\}  $\;
$(s,t)\leftarrow \argmax_{i \in P_1,j \in P_2} \sum_{k\neq i, k \in P_1}L_{jk} - \sum_{k\neq j, k \in P_2}L_{jk} + \sum_{ k\neq j, k\in P_2 }L_{ik}  -\sum_{k\neq i, k \in P_1}L_{ik}$\;
	$\Delta_{cost} \leftarrow \sum_{k\neq s, k \in P_1}L_{tk} - \sum_{k\neq t, k \in P_2}L_{tk} + \sum_{ k\neq t, k\in P_2 }L_{sk}  -\sum_{k\neq s, k \in P_1}L_{sk}$\;
	}
	$\mathcal{P}^* \leftarrow \{P_1,P_2\}$\;
	\caption{2-opt method for bisection problems}
\end{algorithm}

\section{Numerical results}\label{sec:tests}

We implemented all the algorithms in MATLAB and run the numerical experiments on a ThinkPad-X1-Carbon-6th with 8 Intel(R) Core(TM) i7-8550U CPU @ 1.80GHz.
The maximum iterations for extended ADMM is set to be 20\;000 and the stopping tolerance $\varepsilon_{tol}$ is set to be $10^{-5}$  by default. 

The code can be downloaded from \href{https://github.com/shudianzhao/ADMM-GP}{https://github.com/shudianzhao/ADMM-GP}.

\subsection{Instances}

In order to evaluate the performance of our algorithms, we run numerical experiments on several classes of instances.
All instances can be downloaded from  \href{https://github.com/shudianzhao/ADMM-GP}{https://github.com/ shudianzhao/ADMM-GP}. The first set of instances for the $k$-equipartition problem are described in~\cite{lisser2003graph},
the construction is as follows.
\begin{enumerate}	
    \item Choose edges of a complete graph randomly with probability $20\%$, $50\%$ and $80\%$.
    \item The nonzero edge weights are integers in the interval $(0,100]$. 
    \item Choose the partition numbers as divisors of the graph size $n$. 
\end{enumerate}
We name those three groups of instances rand20, rand50 and rand80, respectively.

\rew{Furthermore}, we consider instances that have been used in~\cite{armbruster2007}.
These are constructed in the following way.
\begin{itemize}
	\item $G_{|V |,|V |_p}$: Graphs $G(V,E)$, with $|V | \in \{124, 250, 500, 1000\}$ and four individual edge probabilities $p$. These
	probabilities were chosen depending on $|V|$, so that the average expected degree of each node was
	approximately $|V |_p = 2.5, 5, 10, 20$ \cite{johnson1989optimization}.
	\item $U_{|V |,|V |_{\pi d^2}}$: For a graph $G(V,E)$, first choose $2|V |$ independent numbers uniformly
	from the interval $(0, 1)$ and view them as coordinates of $|V |$ nodes on the unit square. Then, an edge
	is inserted between two vertices if and only if their Euclidian distance is less or equal
	to some pre-specified value $d$ \cite{johnson1989optimization}. Here $|V | \in \{ 500, 1000\}$ and $|V |_{\pi d^2} \in \{ 5, 10, 20, 40\}$.
	\item $mesh$: Instances from finite element meshes; all edge weights are equal to one~\cite{de1993graph}.
\end{itemize}

For GPKC we generate instances as described in~\cite{nguyen2016contributions}.
This is done by the following steps.
\begin{enumerate}
    \item Generate a random matrix with $20\%$, $50\%$ and $80\%$ of nonzeroes edge weights between $0$ and $100$, as vertex weights choose integers from the interval $(0,1000]$. 
    \item Determine a feasible solution for this instance for a $k$-equipartition problem by some heuristic method.
    \item Produce $1000$ permutations of the vertices in this $k$-equipartition.
    \item Calculate the capacity bound for each instance and select the one such that only $10\%$ of instances are feasible.
\end{enumerate}
We name those three groups of instances GPKCrand20, GPKCrand50 and GPKCrand80, respectively.

\subsection{Comparison of Post-processing Methods}\label{sec:numerical-post-proc}
Our first numerical comparisons evaluate the different post-processing methods used to produce safe lower bounds for the graph partition problems. Recall that in Section~\ref{sec:post-processing}, we introduced Algorithms~\ref{alg:lb-eADMM} and \ref{alg:post-process3}.
%

\begin{figure} 
	\centering
	\hspace*{\fill}
	\subfloat[$n=100,k=2$\label{fig:lb_a}]{\includegraphics[width = 2.2in]{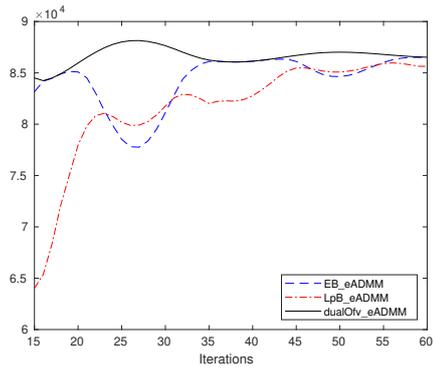}} \centerhfill
	\subfloat[$n=100,k=5$\label{fig:lb_b}]{\includegraphics[width = 2.2in]{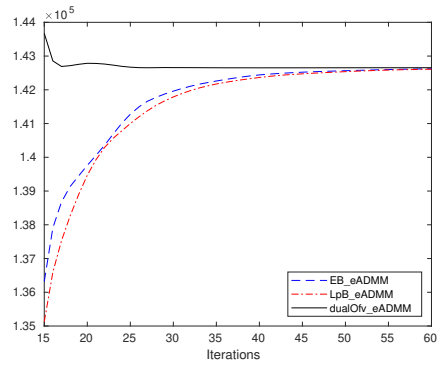}} \hspace*{\fill}
	
	\hspace*{\fill}
	\subfloat[$n=100,k=10$\label{fig:lb_c}]{\includegraphics[width = 2.2in]{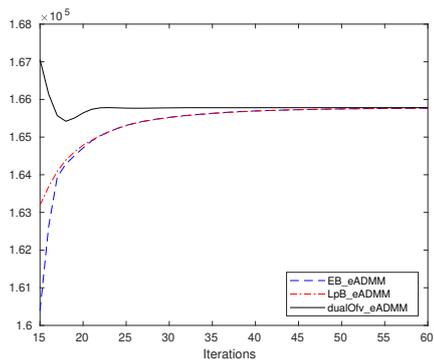}}\centerhfill
	\subfloat[$n=100,k=20$\label{fig:lb_d}]{\includegraphics[width = 2.2in]{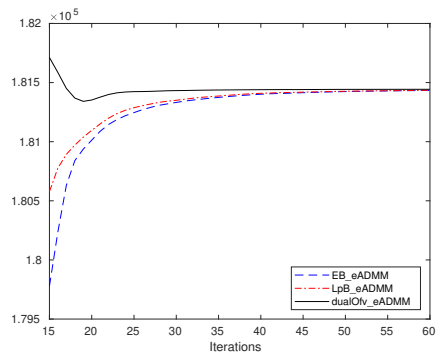}}\hspace*{\fill}
	\caption{Lower bounds obtained with post processing}
	\label{fig:lb_compare}
\end{figure}

Figure~\ref{fig:lb_compare} shows how the lower bounds from the post-processing methods evolve as the number of iterations of the extended ADMM increases.
\rew{We used the DNN relaxation on an instance of the $k$-equipartition problem of size $n=100$ and $k=2$.}
There are three lines: EB\_eADMM represents the lower bounds obtained by the rigorous lower bound method given in Algorithm~\ref{alg:post-process3}, LpB\_eADMM represents the linear programming bound given in Algorithm~\ref{alg:lb-eADMM} and dualOfv\_eADMM displays the approximate dual objective function value obtained by our extended ADMM. Figure~\ref{fig:lb_a} shows that the rigorous error bound method gives tighter bounds in general, while the linear programming bound method is more stable and less affected by the quality of the dual objective function value. The other figures indicate that for small $k$, the rigorous error bound method gives tighter bounds (see Figure \ref{fig:lb_b}), but as $k$ increases, the linear programming bound method dominates (see Figure~\ref{fig:lb_c} and~\ref{fig:lb_d}).
\begin{remark}
  We choose Algorithm~\ref{alg:lb-eADMM} in all following experiments as post-processing for $k$-equipartition problems because this method is more stable for varying~$k$.
  For GPKC we use Algorithm~\ref{alg:post-process3} for the post-processing since we have no information on the eigenvalue of an optimal solution.
\end{remark}

\subsection{Results for $k$-equipartition}

\subsubsection{Comparison of the Lower Bounds using SDP, DNN and Transitivity Constraints}\label{sec:lowerboundscomp-k-equi}
In this section we want to highlight the improvement of the bounds obtained from the relaxations introduced in Section~\ref{sec:gpp}.
Note that the timings for computing these bounds are discussed later in Section~\ref{sec:comptimes-k-equi}.

In practice, adding all the transitivity constraints is computationally too expensive, we run a DNN-based loop instead.
The idea is as follows. 
\begin{enumerate}
	\item Solve the DNN~\eqref{eq:keq-dnn} to obtain the solution $X^{DNN}$.
	\item Add $m_{met}$ transitivity constraints that are most violated by $X^{DNN}$ to the relaxation.
	\item  Solve the resulting relaxation and repeat adding newly violated constraints until the maximum number of iterations is reached or no more violated constraints are found.
\end{enumerate}

Tables~\ref{tab:keq_lb_80}, \ref{tab:keq_lb_50} and~\ref{tab:keq_lb_20} compare the lower bounds obtained from the relaxations for the $k$-equipartition problem. The improvements are calculated as $(d_{DNN} -d_{SDP})/d_{SDP}$ and $(d_{DNN+MET} -d_{SDP})/d_{SDP}$, respectively;  a `$-$' indicates that no transitivity constraints violated by the SDP solution of problem~\eqref{eq:keq-dnn} have been found.
In~\cite{rendl1999semidefinite} it has been observed that the
violation of the transitivity constraints is small and the
nonnegativity constraints $X \geq 0$ are more important than $X \in
\textrm{MET}$ when the partition number $k$ increases.
\rew{In our experiments we also observe that the improvement due to the
  nonnegativity constraints gets even better as $k$ increases.}

\pgfplotstabletypeset[  
col sep=comma, 
fixed zerofill,
fixed,
font=\footnotesize,
multicolumn names, 
display columns/0/.style={int detect,
	column name=$n$, 
	},  
display columns/1/.style={int detect,
	column name=$k$,
	},
display columns/2/.style={,
	column name=$lb_{SDP}$,
	},
display columns/3/.style={
	column name=$lb_{DNN}$,
	},
display columns/4/.style={
	column name=Imp~\%,
	},
display columns/5/.style={empty cells with={--},
	column name=$lb_{DNN+MET}$,
},
display columns/6/.style={
	column name=Imp~\%,empty cells with={--},
},
every row no 6/.style={before row={\midrule}},
every row no 11/.style={after row={\midrule
$\vdots$	&$\vdots$  &$\vdots$  &$\vdots$ &$\vdots$ &$\vdots$ &$\vdots$ \\}},
every row no 12/.style={before row={\midrule}},
every row no 20/.style={before row={\midrule}},
every head row/.style={
	fixed,precision=2,
	before row={
		\caption{$k$-equipartitioning lower bounds on rand80}\label{tab:keq_lb_80}\\
		\toprule}, 
	after row={
\endfirsthead
		\midrule} 
},
every last row/.style={after row=\bottomrule}, 
]{rand_keq_sdp_dnn.csv} 

	\pgfplotstabletypeset[  
multicolumn names, 
col sep=comma, 
fixed zerofill,
fixed,
font=\footnotesize,
display columns/0/.style={int detect,
	column name=$n$, 
	},  
display columns/1/.style={int detect,
	column name=$k$,
	},
display columns/2/.style={
	column name=$lb_{SDP}$,
	},
display columns/3/.style={
	column name=$lb_{DNN}$,
	},
display columns/4/.style={
	column name=Imp~\%,
	},
display columns/5/.style={empty cells with={--},
	column name=$lb_{DNN+MET}$,
},
display columns/6/.style={empty cells with={--},
	column name=Imp~\%,
},
every row no 6/.style={before row={\midrule}},
every row no 11/.style={after row={\midrule
		$\vdots$	&$\vdots$  &$\vdots$  &$\vdots$ &$\vdots$ &$\vdots$ &$\vdots$ \\}},
every row no 12/.style={before row={\midrule}},
every row no 20/.style={before row={\midrule}},
every head row/.style={
	fixed,precision=2,
	before row={
		\caption{$k$-equipartitioning lower bounds on rand50}\label{tab:keq_lb_50}\\
		\toprule}, 
	after row={
\endfirsthead
		\midrule} 
},
every last row/.style={after row=\bottomrule}, 
]{rand_keq_sdp_dnn_50.csv} 
	\pgfplotstabletypeset[  
multicolumn names, 
col sep=comma, 
fixed zerofill,
fixed,
	font=\footnotesize,
display columns/0/.style={int detect,
	column name=$n$, 
	},  
display columns/1/.style={int detect,
	column name=$k$,
	},
display columns/2/.style={
	column name=$lb_{SDP}$,
	},
display columns/3/.style={
	column name=$lb_{DNN}$,
	},
display columns/4/.style={
	column name=$\textrm{Imp}~\%$,
	},
display columns/5/.style={empty cells with={--},
	column name=$lb_{DNN+MET}$,
},
display columns/6/.style={empty cells with={--},
	column name=$\textrm{Imp}~\%$,
},
every row no 6/.style={before row={\midrule}},
every row no 11/.style={after row={\midrule
	$\vdots$		& $\vdots$ &$\vdots$  &$\vdots$ & $\vdots$& $\vdots$& $\vdots$\\}},
every row no 12/.style={before row={\midrule}},
every row no 20/.style={before row={\midrule}},
every head row/.style={
	fixed,precision=2,
	before row={
		\caption{$k$-equipartitioning lower bounds on rand20}\label{tab:keq_lb_20}\\
		\toprule}, 
	after row={
\endfirsthead
		\midrule} 
},
every last row/.style={after row=\bottomrule}, 
]{rand_keq_sdp_dnn_20.csv} 

\subsubsection{Comparisons between extended ADMM and Interior Point Methods (IPMs) on $k$-equipartition}\label{sec:comptimes-k-equi}
In this section we want to demonstrate the advantage of our extended ADMM over interior point methods.
For our comparisons we use Mosek~\cite{mosek}, one of the currently best performing \rew{interior point solvers}.

\rew{Note that computing an equipartition for these graphs using commercial
  solvers is out of reach. For instance, Gurobi obtains for a graph
  with $n=100$ vertices and $k\in \{2,4,5,10,20,25\}$ after
  120~seconds a gap of at least 80~\%, whereas we obtain gap
  up to at most 7~\%.}

We list the results for solving the SDP~\eqref{eq:keq-sdp} using an ADMM, and the results when solving the DNN relaxation~\eqref{eq:keq-dnn} by our extended ADMM and by Mosek.
We run the experiments on randomly generated graphs with 80\% density, the results are given in Table~\ref{tab:time_keq}. 

	\pgfplotstabletypeset[  
	font=\footnotesize,
	multicolumn names, 
	col sep=comma, 
	fixed zerofill,
	fixed,
	display columns/0/.style={int detect,
		column name=$n$, 
		},  
	display columns/1/.style={int detect,
		column name=$k$,
		},
	display columns/2/.style={int detect,
		column name=$\textrm{Iter}$,
		},
	display columns/3/.style={
		column name=$\textrm{CPU time}$,
		},
	display columns/4/.style={int detect,
		column name=$\textrm{Iter}$,
		},
	display columns/5/.style={
		column name=$\textrm{CPU time}$,
	},
	display columns/6/.style={
		column name=$\textrm{CPU time}$,empty cells with={--}, 
	},
	every row no 6/.style={before row={\midrule}},
	every row no 12/.style={before row={\midrule}},
	every row no 20/.style={before row={\midrule}},
	every row no 28/.style={before row={\midrule}},
	every row no 35/.style={before row={\midrule}},
	every row no 45/.style={before row={\midrule}},
	every row no 53/.style={before row={\midrule}},
	every row no 61/.style={before row={\midrule}},
	every row no 69/.style={before row={\midrule}},
	every head row/.style={fixed,precision=2,
		before row={
		\caption{Computation times for $k$-equipartitioning problems}\label{tab:time_keq}\\
		\toprule 
		& & \multicolumn{4}{c}{ADMM} & Mosek \\
		& & \multicolumn{2}{c}{SDP \eqref{eq:keq-sdp}} & \multicolumn{2}{c}{DNN \eqref{eq:keq-dnn}} & DNN \eqref{eq:keq-dnn} \\},
		after row={
		& & &(\si{\second}) & &(\si{\second}) &(\si{\second})  \\ 
	\endfirsthead
		\midrule 
	 } 
		},
	every last row/.style={after row=\bottomrule}, 
	]{admm_keq_rand_time.csv} 
	

Table~\ref{tab:time_keq} shows that the convergence behavior of the extended ADMM is not worse than the 2-block ADMM for SDP problems, and we can get a tighter lower bound by forcing nonnegativity constraints in the model without higher computational expense.

The results for Mosek solving problem~\eqref{eq:keq-dnn} clearly show that these problems are out of reach for interior point solvers. A ``$-$'' indicates that Mosek failed to solve this instance due to memory requirements.

\subsubsection{Heuristics on $k$-equipartition Problems}

We now compare the heuristics introduced in Section~\ref{sec:up_bound} to get upper bounds for the graph partition problems. 

\rew{We use the solutions obtained from the DNN relaxation to build upper bounds for the $k$-equipartition problem since the experimental results in Section~\ref{sec:lowerboundscomp-gpkc} showed that the DNN  relaxation has a good tradeoff between quality of the bound and solution time.}

We compare to the best know primal solution given in~\cite{armbruster2007}; we set the time limit for our heuristics to 5~seconds. The gaps between the upper bounds by Vc+2opt (resp. Hyp+2opt) and the best know solution are shown in Table~\ref{tab:keq_armbruster}. The primal bounds that are proved to be optimal are marked with ``$^*$''.

\pgfplotstableread[col sep=comma]{armbruster_kequi_data.tex}\data
\pgfplotstabletypeset[  
	font=\footnotesize,
	multicolumn names, 
	columns={
	[index]0,[index]1,[index]2,[index]3,[index]4,[index]5,[index]6,[index]7
	},
	fixed zerofill,
	fixed,
	display columns/0/.style={string type,
	column name=Graph, 
	},  
	display columns/1/.style={int detect,string type,
	column name=$n$,
	},  
	display columns/2/.style={int detect,string type,
	column name=$k$,
	},
	display columns/3/.style={precision=0,
	column name=Vc+2opt,
	},
	display columns/4/.style={precision=2,
		column name=Gap,
	},
	display columns/5/.style={precision=0,
	column name=Hyp+2opt,
	},
	display columns/6/.style={precision=2,
	column name=Gap,
	},
	display columns/7/.style={precision=0,
		column name=Best Bound,
	},
	every row no 8/.style={before row={\midrule}},
	every row no 23/.style={after row={\midrule}},
	every head row/.style={
	before row={
		\caption{Feasible solutions for the graphs from~\cite{armbruster2007} (the time limit  is 5 seconds, optimal solutions are indicated by a ``$^*$'')}\label{tab:keq_armbruster}\\
		\toprule}, 
	after row={
		&  &  & &\si{\percent} & &\si{\percent} \\
		\endfirsthead
	  	\midrule} 
	},	
	row iteratorOptiter/.style={
	every row #1 column 7/.style={
		postproc cell content/.append style={
			/pgfplots/table/@cell content/.add={$^*$}}
	}
	},
	row iteratorOptiter/.list={0,1,4,5,8,9,10,11,12,24,25,26,27,28},
	every last row/.style={after row=\bottomrule}
]{\data} 

Table~\ref{tab:keq_armbruster} shows that on small instances our heuristics can find upper bounds not worse than the best known upper bounds. For large instances, Vc+2opt performs  better than Hyp+2opt. The corresponding upper bounds are less than 10 \% away from the best known upper bounds, some of them computed using 5 hours.

\bigskip

We next compare the upper bounds for the instances rand80, rand50, and rand20.
Figure~\ref{fig:up_compare_100} shows that, for small instances (i.e., $n=100$), our hybrid methods (eg. Vc+2opt and Hyp+2opt) can find tight upper bounds quickly while simulated annealing (SA) needs a longer burning down time to achieve an upper bound of good quality. 

Figure~\ref{fig:up_compare_1000} shows how the heuristics behave for large-scale instances (i.e, $n= 1000$). The time limit is set to 5~seconds. Compared to Figure~\ref{fig:up_compare_100}, Vc+2opt and Hyp+2opt take more time to generate the first upper bounds but these upper bounds are much tighter than the one found by SA. Also, when the time limit is reached, the upper bounds found by Vc+2-opt and Hyp+2opt are much tighter than those from SA.

%
%

\begin{figure} 
	\centering
	\hspace*{\fill}%
	\subfloat[$n=100,k=2$]{\includegraphics[width = 2.2in]{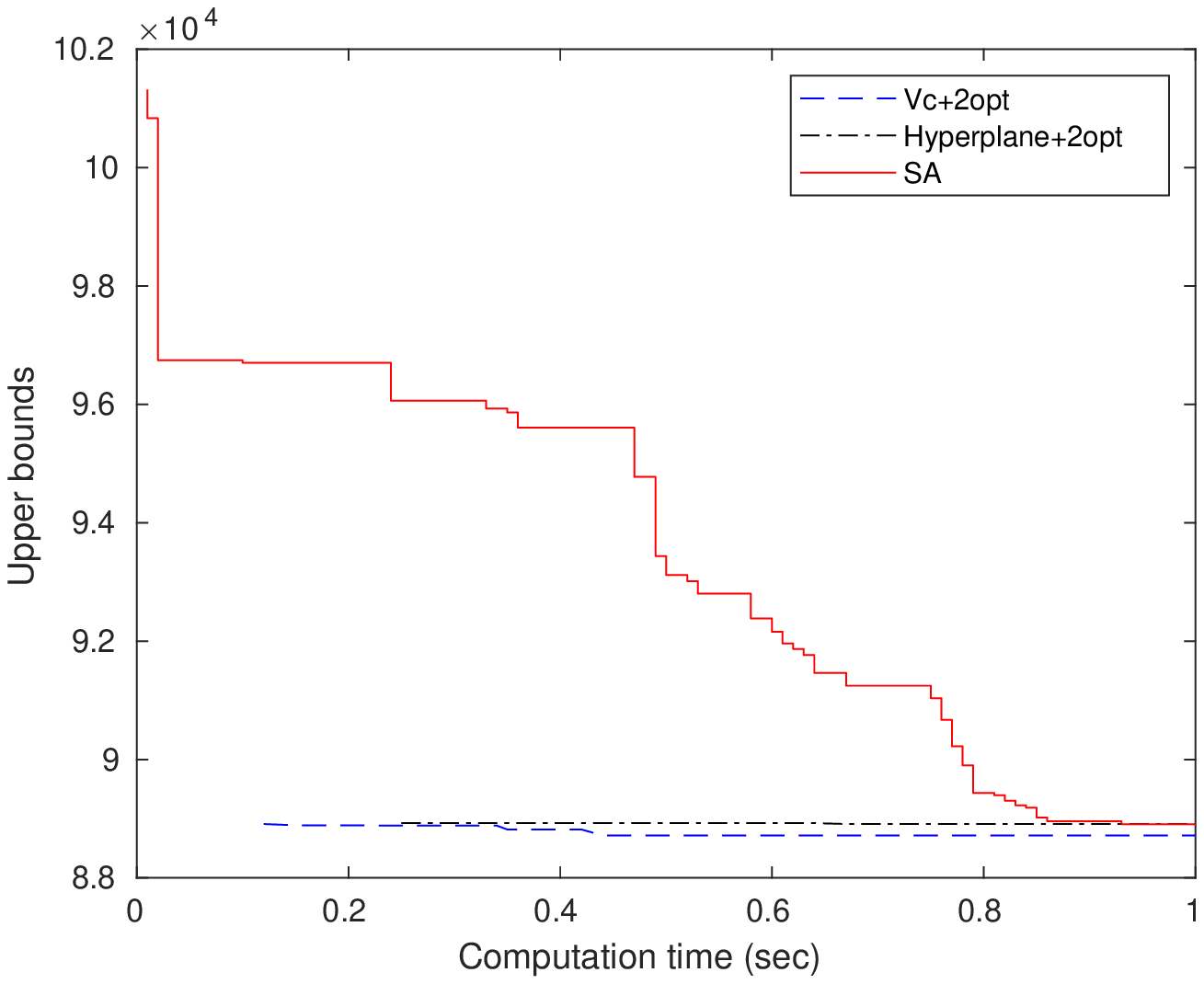}}\centerhfill
	\subfloat[$n=100,k=5$]{\includegraphics[width = 2.2in]{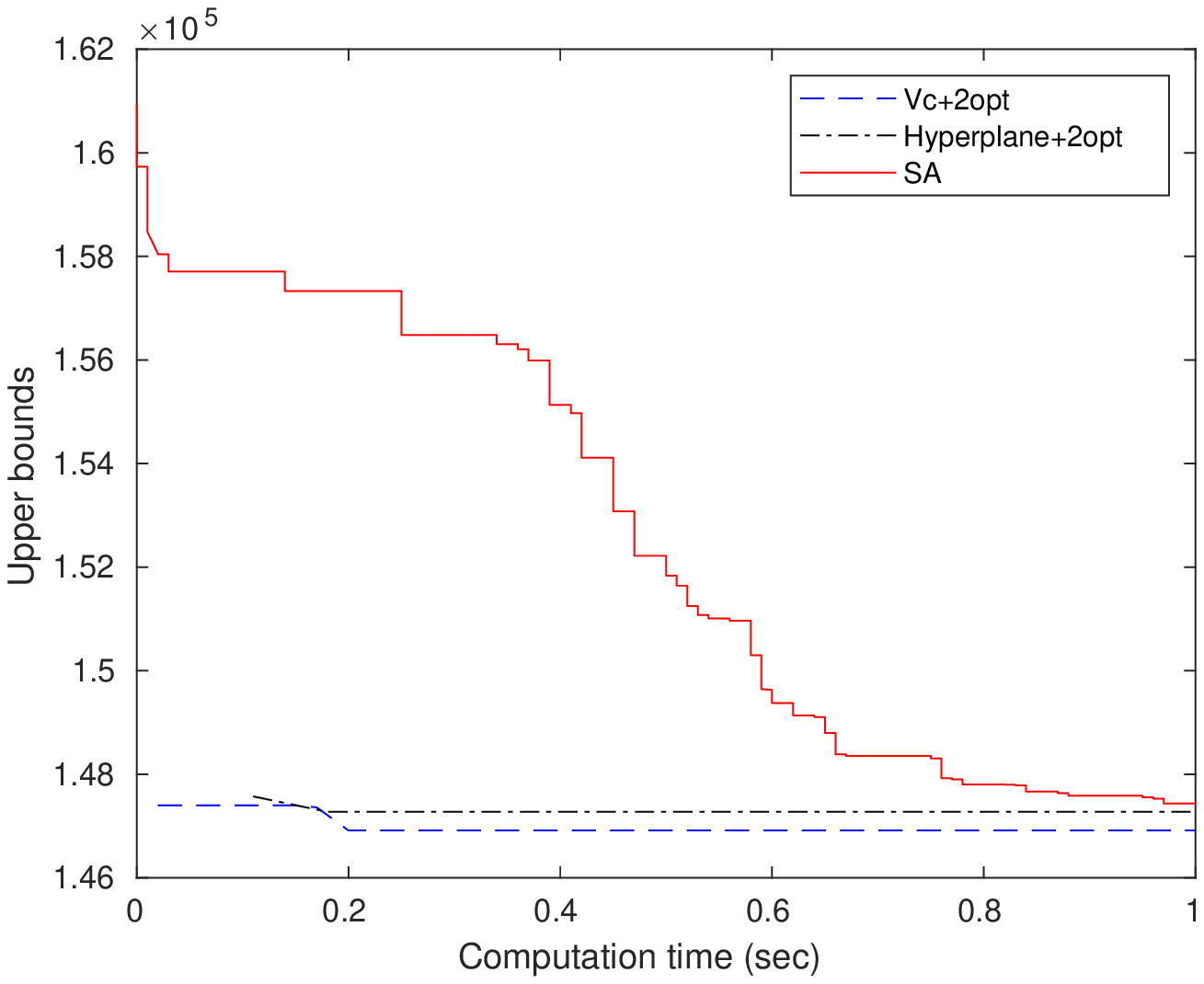}}\hspace*{\fill}
	
	\hspace*{\fill}%
	\subfloat[$n=100,k=10$]{\includegraphics[width = 2.2in]{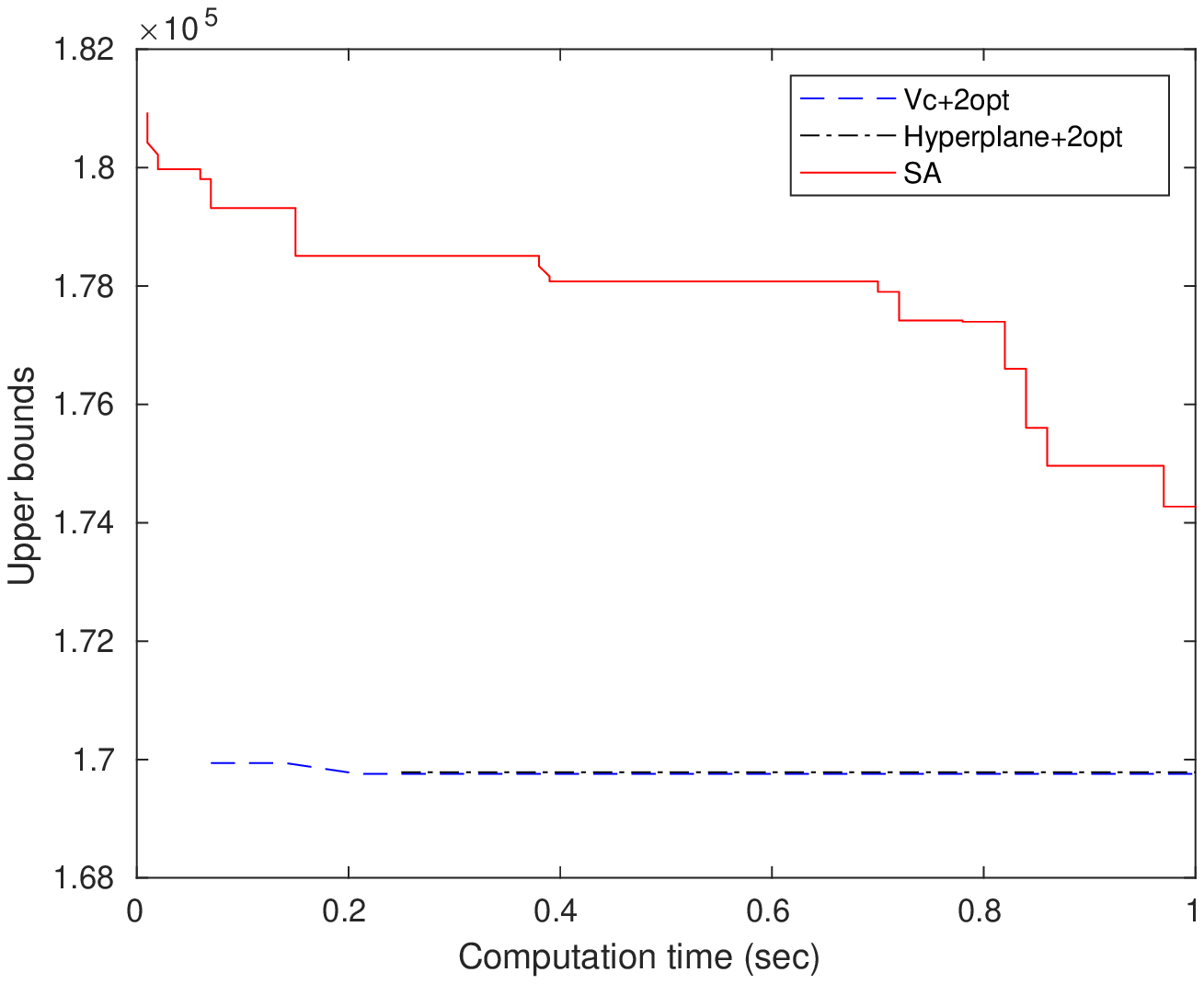}}\centerhfill
	\subfloat[$n=100,k=20$]{\includegraphics[width = 2.2in]{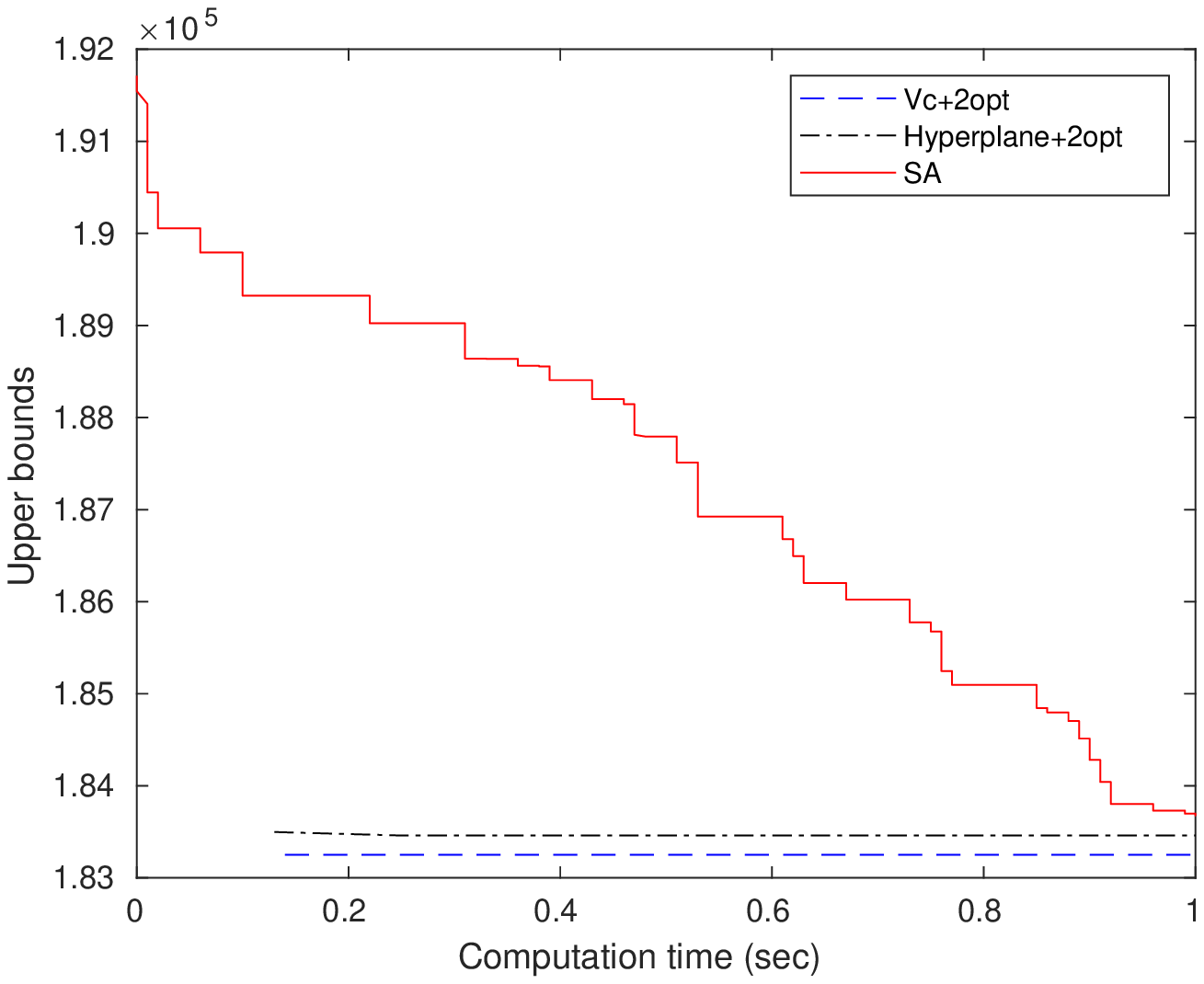}}\hspace*{\fill}%
	
	\caption{Upper bounds for $k$-equipartition problems on rand80 with $n=100$}\label{fig:up_compare_100}
\end{figure}

%

\begin{figure} 
	\centering
	\hspace*{\fill}%
	\subfloat[$n=1000,k=2$]{\includegraphics[width = 2.2in]{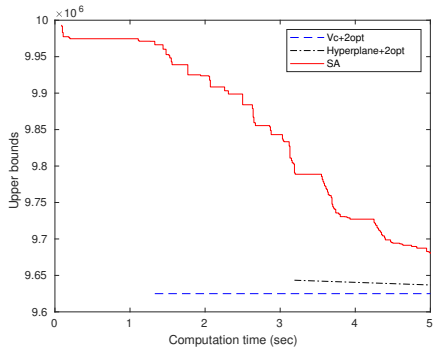}} \centerhfill
	\subfloat[$n=1000,k=10$]{\includegraphics[width = 2.2in]{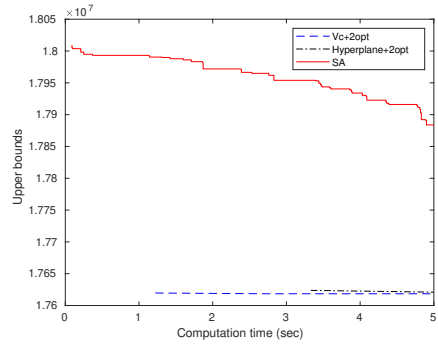}}\hspace*{\fill}%
	
	\hspace*{\fill}%
	\subfloat[$n=1000,k=40$]{\includegraphics[width = 2.2in]{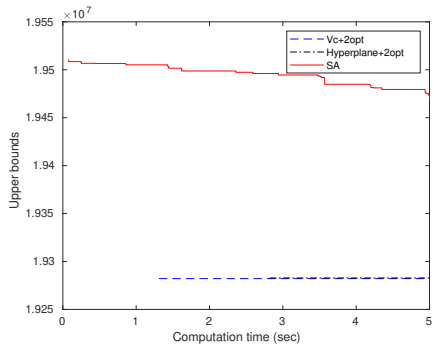}}\centerhfill
	\subfloat[$n=1000,k=100$]{\includegraphics[width = 2.2in]{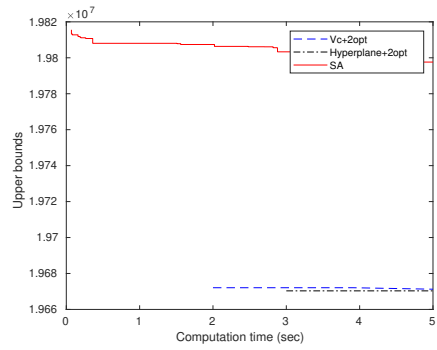}} \hspace*{\fill}%

	\caption{Upper bounds for $k$-equipartition problems on rand80 with $n=1000$}
	\label{fig:up_compare_1000}
\end{figure}

Tables~\ref{tab:keq_ub}, \ref{tab:keq_ub_50} and~\ref{tab:keq_ub_20} give a detailed comparison of the upper bounds for the instances rand80, rand50, and rand20, respectively.
We display the gap between the lower bounds obtained from the DNN relaxation~\eqref{eq:keq-dnn} and the upper bounds built by varied heuristics.
The time limit for the heuristics is set to 1~second for $n\in \{100,200\}$ and 3~seconds for $n\in \{900,1000\}$ for rand80.
For rand50 and rand20 we set the limit to 5~seconds.
The best upper bounds are typeset in bold.

The numbers confirm that Vc+2opt and Hyp+2opt can build tighter upper bounds than SA, in particular for the dense graphs rand80. Overall, Vc+2opt has the best performance.

Comparing lower and upper bounds, the numerical results show that our methods perform very well on dense graphs; for rand80 the largest gap is less than 4\%, for rand50 the largest gap is less than 6\%. As the randomly generated graph gets sparser, the gap between lower bounds and upper bounds increases, for rand20 the gap is bounded by 12\%.

	\pgfplotstableread[col sep=comma]{admm_keq_rand_ub.csv}\data
	\pgfplotstabletypeset[  
	font=\footnotesize,
	multicolumn names, 
	fixed zerofill,
	fixed,
	display columns/0/.style={int detect,string type,
		column name=$n$, 
		},  
	display columns/1/.style={int detect,string type,
		column name=$k$,
		},
	display columns/2/.style={precision=2,
		column name=$lb_{DNN}$},
	display columns/3/.style={precision=0,
		column name=Vc+2opt},
	display columns/4/.style={
		column name=Gap},
	display columns/5/.style={precision=0,
		column name=Hyp+2opt,
		},
	display columns/6/.style={
		column name=Gap},
	display columns/7/.style={precision=0,
		column name=SA
	},
	display columns/8/.style={
	column name=Gap},
	every row no 6/.style={before row={\midrule}},
	every row no 11/.style={after row={\midrule
			$\vdots$	&$\vdots$  &$\vdots$  &$\vdots$ &$\vdots$ \\}},
	every row no 12/.style={before row={\midrule}},
	every row no 20/.style={before row={\midrule}},
	every head row/.style={
		before row={
			\caption{Feasible solutions for randomly generated graphs rand80 (for instances with $n \in \{100,200\}$, the time limit is 1 second; for instances with $n \in \{ 900, 1000\}$, the limit is 3 seconds)}\label{tab:keq_ub}\\
			\toprule}, 
		after row={
 &  &  & &\si{\percent} &  &\si{\percent} & &\si{\percent}\\
		  	\midrule} 
	},	
	row iteratorNC/.style={
		every row #1 column 3/.style={
		postproc cell content/.append style={
			/pgfplots/table/@cell content/.add={$\bf}{$}}}
	},
	row iteratorHy/.style={
		every row #1 column 5/.style={
			postproc cell content/.append style={
				/pgfplots/table/@cell content/.add={$\bf}{$}}}
	},	
	every last row/.style={after row=\bottomrule}, 
	row iteratorNC/.list={0,2,3,5,6,7,9,10,11,12,13,14,19,20,21,22,23,24,25},
	row iteratorHy/.list={1,4,8,15,16,17,18,26,27},
	]{\data} 

	\pgfplotstableread[col sep=comma]{admm_keq_rand_ub_50_5s.csv}\data
	\pgfplotstabletypeset[  
	font=\footnotesize,
	fixed zerofill,
	fixed,
	multicolumn names, 
	display columns/0/.style={int detect,string type,
	column name=$n$, 
	},  
	display columns/1/.style={int detect,string type,
		column name=$k$,
	},
	display columns/2/.style={precision=2,
		column name=$lb_{DNN}$},
	display columns/3/.style={precision=0,
		column name=Vc+2opt},
	display columns/4/.style={
		column name=Gap},
	display columns/5/.style={precision=0,
		column name=Hyp+2opt,
	},
	display columns/6/.style={
		column name=Gap},
	display columns/7/.style={precision=0,
		column name=SA
	},
	display columns/8/.style={
		column name=Gap
	},
	every row no 6/.style={before row={\midrule}},
	every row no 11/.style={after row={\midrule
			$\vdots$	&$\vdots$  &$\vdots$  &$\vdots$ &$\vdots$ \\}},
	every row no 12/.style={before row={\midrule}},
	every row no 18/.style={before row={\midrule}},
	every head row/.style={
		before row={
			\caption{Feasible solutions for randomly generated graphs rand50 (time limit 5 seconds)}\label{tab:keq_ub_50}\\
			\midrule}, 
		after row={
 &  &  & &\si{\percent} &  &\si{\percent} & &\si{\percent}  \\
		  	\endfirsthead
	  			\midrule } 
	},	
	row iteratorNC/.style={
		every row #1 column 3/.style={
		postproc cell content/.append style={
			/pgfplots/table/@cell content/.add={$\bf}{$}}}
	},
	row iteratorHy/.style={
		every row #1 column 5/.style={
			postproc cell content/.append style={
				/pgfplots/table/@cell content/.add={$\bf}{$}}}
	},
	row iteratorSA/.style={
		every row #1 column 7/.style={
		postproc cell content/.append style={
			/pgfplots/table/@cell content/.add={$\bf}{$}}}
	},	
	every last row/.style={after row=\bottomrule}, 
	row iteratorNC/.list={0,1,2,6,7,8,12,13,14,15,16,17,18,19,20,22,23,24,25,27},
	row iteratorHy/.list={0,3,4,21,26},
	row iteratorSA/.list={5,9,10,11},
	]{\data} 

	\pgfplotstableread[col sep=comma]{admm_keq_rand_ub_20_5s.csv}\data
	\pgfplotstabletypeset[  
	font=\footnotesize,
	fixed zerofill,
	fixed,
	multicolumn names, 
	display columns/0/.style={int detect,string type,
		column name=$n$, 
	},  
	display columns/1/.style={int detect,string type,
		column name=$k$,
	},
	display columns/2/.style={precision=2,
		column name=$lb_{DNN}$},
	display columns/3/.style={precision=0,
		column name=Vc+2opt},
	display columns/4/.style={
		column name=Gap},
	display columns/5/.style={precision=0,
		column name=Hyp+2opt,
	},
	display columns/6/.style={
		column name=Gap},
	display columns/7/.style={precision=0,
		column name=SA
	},
	display columns/8/.style={
		column name=Gap
	},
	every row no 6/.style={before row={\midrule}},
	every row no 11/.style={after row={\midrule
			$\vdots$	&$\vdots$  &$\vdots$  &$\vdots$ &$\vdots$ \\}},
	every row no 12/.style={before row={\midrule}},
	every row no 18/.style={before row={\midrule}},
	every head row/.style={
		before row={
			\caption{Feasible solutions for randomly generated graphs rand20 (time limit 5 seconds) }\label{tab:keq_ub_20}\\
			\midrule}, 
		after row={
 &  &  & &\si{\percent} &  &\si{\percent} & &\si{\percent}  \\
  \endfirsthead
	\midrule} 
	},	
	row iteratorNC/.style={
		every row #1 column 3/.style={
		postproc cell content/.append style={
			/pgfplots/table/@cell content/.add={$\bf}{$}}}
	},
	row iteratorHy/.style={
		every row #1 column 5/.style={
			postproc cell content/.append style={
				/pgfplots/table/@cell content/.add={$\bf}{$}}}
	},	
	row iteratorSA/.style={
		every row #1 column 7/.style={
			postproc cell content/.append style={
				/pgfplots/table/@cell content/.add={$\bf}{$}}}
	},	
	every last row/.style={after row=\bottomrule}, 
	row iteratorNC/.list={0,1,3,4,8,9,11,12,13,14,15,16,17,18,19,21,22,23,24,25,26},
	row iteratorHy/.list={0,6,7,20,27},
	row iteratorSA/.list={2,5,10},
	]{\data} 

\bigskip

Comparing Tables~\ref{tab:keq_ub}, \ref{tab:keq_ub_50} and~\ref{tab:keq_ub_20}, it can be observed that the gaps get larger as the graph gets sparser.
\rew{We conjecture that} this is due to less tightness of the lower bound which \rew{is supported by} the following experiment.

We regard the best upper bounds obtained from all three heuristics within an increased time limit of 10~seconds. In this way, we should have an upper bound that approximates the optimal solution well enough for all densities.
As an example, in Table~\ref{tab:keq_rand_100_5} we report for a graph on 100 vertices and three different densities the lower and these upper bounds. We can clearly see that when the graph gets sparser, adding the nonnegativity constraints to the SDP relaxation~\eqref{eq:keq-sdp} gains more improvement. However, the gap between lower and upper bound gets worse as the graph gets sparser.

	\pgfplotstabletypeset[  
	font=\footnotesize,
	multicolumn names, 
	col sep=comma, 
	fixed zerofill,
	fixed,
	display columns/0/.style={int detect, 
		column name=Density,string type, 
	},  
	display columns/1/.style={precision=2,
		column name=$lb_{SDP}$,
	},
	display columns/2/.style={precision=2,
		column name=$lb_{DNN}$,
	},
	display columns/3/.style={string type,
		column name=Improvement,
	},
	display columns/4/.style={precision=0,
		column name=$ub$,
	},
	display columns/5/.style={precision=2,
		column name=Gap,
	},
	every head row/.style={
		before row={
			\caption{Feasible solutions for the randomly generated graphs rand80, rand50, rand20 ($n=100$ $k=5$, with an increased time limit for heuristics of 10 seconds)}\label{tab:keq_rand_100_5}\\
			\midrule}, 
		after row={
			\si{\percent}& &  & \si{\percent} & &\si{\percent}\\
			\endfirsthead
			\midrule} 
	},	
	every last row/.style={after row=\bottomrule}, 
	]{admm_keq_rand_compare_n100_k5.csv} 

\subsection{Results for GPKC}
\subsubsection{Comparison of the Lower Bounds using SDP, DNN and Transitivity Constraints}\label{sec:lowerboundscomp-gpkc}

We now turn our attention to the GPKC problem.
We run experiments similar to those presented in Section~\ref{sec:lowerboundscomp-k-equi}, i.e., 
we solve DNN~\eqref{eq:gpkc-dnn} to obtain the solution $X^{DNN}$.
Table~\ref{tab:gpkc-lb} shows the lower bounds for GPKC problems on the randomly generated graphs rand80.
The improvements are calculated in the same way as in the previous section. The experimental results on GPKCrand50 and GPKCrand20 are omitted since they have a similar behavior.

The lower bounds obtained from different SDP relaxations show that when the capacity bound $W$ decreases (and thus the number of groups increases), the improvement of the nonnegativity constraints gets more significant. This is in line with the results for $k$-equipartition. 
And, also similar to $k$-equipartition, for GPKC the improvement due to the transitivity constraints is only minor.

 
	\pgfplotstabletypeset[  
		font=\footnotesize,
	multicolumn names, 
	col sep=comma, 
	precision =2,
	fixed,
	fixed zerofill,
	display columns/0/.style={int detect,
		column name=$n$, 
		},  
	display columns/1/.style={string type,
		column name=$W$,
		},
	display columns/2/.style={
		column name=$lb_{SDP}$,
		},
	display columns/3/.style={
		column name=$lb_{DNN}$,
		},
	display columns/4/.style={
		column name=$\textrm{Imp}~\%$,
		},
	display columns/5/.style={
		column name=$lb_{DNN+MET}$,
	},
	display columns/6/.style={
		column name=$\textrm{Imp}~\%$,
	},
	every row no 6/.style={before row={\midrule}},
	every head row/.style={fixed,
		before row={
			\caption{GPKC lower bounds on GPKCrand80}\label{tab:gpkc-lb}\\
			\midrule}, 
		after row={
			\midrule \endfirsthead
		} 
	},
	every last row/.style={after row=\bottomrule}, 
	]{rand_GPKC_sdp_dnn.csv} 

\subsubsection{Comparisons between extended ADMM and IPMs for GPKC}\label{sec:comptimes-gpkc}

Table~\ref{tab:gpkc_time} compares the computation times when solving the DNN relaxations for the GPKC~\eqref{eq:gpkc-dnn} by the extended ADMM and Mosek, respectively. A ``--'' indicates  for extended ADMM that the maximum number of iterations is reached, and for Mosek that the instance could not be solved due to memory requirements. The results of the SDP relaxation~\eqref{eq:gpkc-sdp} in Table~\ref{tab:gpkc-lb} are computed using Mosek, hence we omit these timings in Table~\ref{tab:gpkc_time}.

\rew{In the thesis~\cite{nguyen2016contributions}, numerical results on the
GPKC are presented using an LP relaxation.
However, the method therein is capable of getting bounds either for very sparse
graphs of density at most 6~\% (up to 2000~vertices) or for graphs with up to 140~vertices and density of at most 50~\%. We clearly outperform these results in terms of the density of the graphs that can be considered.}

	\pgfplotstabletypeset[  
	font=\footnotesize,
	multicolumn names, 
	col sep=comma, 
	fixed zerofill,
	fixed,
	display columns/0/.style={int detect,
		column name=$n$, 
		},  
	display columns/1/.style={int detect, 
		column name=$W$,
		},
	display columns/2/.style={empty cells with={--},string type,
		column name=$\textrm{Iterations}$,
		},
	display columns/3/.style={empty cells with={--},
		column name=$\textrm{CPU time}$,
	},
	display columns/4/.style={
		column name=$\textrm{CPU time}$,empty cells with={--}, 
	},
	every row no 6/.style={before row={\midrule}},
	every row no 12/.style={before row={\midrule}},
	every row no 20/.style={before row={\midrule}},
	every row no 28/.style={before row={\midrule}},
	every row no 35/.style={before row={\midrule}},
	every row no 45/.style={before row={\midrule}},
	every row no 53/.style={before row={\midrule}},
	every row no 61/.style={before row={\midrule}},
	every row no 69/.style={before row={\toprule}},
	every head row/.style={precision=2,
		before row={
		\caption{Computation times for GPKC problems}\label{tab:gpkc_time}\\
		\toprule 
		& & \multicolumn{2}{c}{extended ADMM} & Mosek \\
		& & \multicolumn{2}{c}{DNN \eqref{eq:gpkc-sdp}} & DNN \eqref{eq:gpkc-sdp} \\},
		after row={
		& & &(\si{\second}) &(\si{\second})  \\ 
		\midrule 
		\endfirsthead
	} 
		},
	every last row/.style={after row=\bottomrule}, 
	]{admm_GPKC_rand_time.csv} 

While for instances of size $n=100$, the timings of the extended ADMM and Mosek are comparable, the picture rapidly changes as $n$ increases. For $n\ge 300$, Mosek cannot solve any instance while the extended ADMM manages to obtain bounds for instances with $n=500$ within one hour.

\subsubsection{Heuristics on GPKC problems}
As mentioned in Section~\ref{sec:up_bound}, the simulated annealing heuristic for the QAP cannot be applied to the GPKC, because there is no equivalence between the GPKC and the QAP. Therefore, we compare the upper bounds for the GPKC from the heuristic introduced in Section~\ref{sec:ub_gpkc} with the lower bounds given by the DNN relaxation~\eqref{eq:gpkc-dnn}. We set a time limit of 5~seconds.
 
Also, we set the maximum number of iterations to be 50\;000 for the sparse graph GPKCrand20, while the maximum numbers of iterations for GPKCrand50 and GPKCrand80 are 20\;000. In Table~\ref{tab:gpkc_ub}, \ref{tab:gpkc_ub_50} and \ref{tab:gpkc_ub_20},  a $^*$ indicates for the extended ADMM that the maximum number of iterations is reached.

Table~\ref{tab:gpkc_ub} shows that the gaps between the lower and upper bounds are less than 3\% for GPKCrand80, they are less than 7\% for GPKCrand50, see Table~\ref{tab:gpkc_ub_50}, and for GPKCrand20, the gaps are less than 15\%, see Table~\ref{tab:gpkc_ub_20}.
Similar to the $k$-equipartition problem, we note that computing the lower bound on the sparse instances is harder. The maximum number of iterations is reached for rand20 much more often than for rand80 or rand50.

		\pgfplotstabletypeset[  
	font=\footnotesize,
	multicolumn names, 
	col sep=comma, 
	fixed zerofill,
	fixed,
	display columns/0/.style={int detect,
		column name=$n$, 
		},  
	display columns/1/.style={int detect,
		column name=$W$,
		},
	display columns/2/.style={precision=2,
		column name=$lb_{DNN}$,
		},
	display columns/3/.style={int detect,
		column name=VC+2opt,
		},
	display columns/4/.style={
		column name=Gap,
		},
	every row no 6/.style={before row={\midrule}},
	every row no 12/.style={before row={\midrule}},
	every row no 20/.style={before row={\midrule}},
	every row no 27/.style={before row={\midrule}},
	every head row/.style={
		before row={
			\caption{Feasible solutions for randomly generated graphs on GPKC problems GPKCrand80 (the maximum number of iterations for eADMM is 20\;000, a $^*$ indicates that the maximum number of iterations is reached)}\label{tab:gpkc_ub}\\
			\midrule}, 
		after row={
		 &  &  & &\si{\percent}  \\
		 		 \endfirsthead
		  	\midrule} 
	},	
	row iteratorMaxiter/.style={
		every row #1 column 2/.style={
			postproc cell content/.append style={
				/pgfplots/table/@cell content/.add={$^*$}}
		}
	},	
	row iteratorMaxiter/.list={0,12,27,},
	every last row/.style={after row=\bottomrule}, 
	]{rand_GPKC_dnn_ub.csv}
	

	\pgfplotstabletypeset[  
	font=\footnotesize,
	multicolumn names, 
	col sep=comma, 
	fixed zerofill,
	fixed,
	display columns/0/.style={int detect,
		column name=$n$, 
		},  
	display columns/1/.style={int detect,
		column name=$W$,
		},
	display columns/2/.style={
		column name=$lb_{DNN}$,
		},
	display columns/3/.style={int detect,
		column name=VC+2opt,
		},
	display columns/4/.style={
		column name=Gap,
		},
	every row no 6/.style={before row={\midrule}},
	every row no 12/.style={before row={\midrule}},
	every row no 20/.style={before row={\midrule}},
	every row no 27/.style={before row={\midrule}},
	every head row/.style={
		before row={
			\caption{Feasible solutions for randomly generated graphs on GPKC problems GPKCrand50 (the maximum number of iterations for eADMM is 20\;000, a $^*$ indicates that the maximum number of iterations is reached)}\label{tab:gpkc_ub_50}\\
			\midrule}, 
		after row={
		 &  &  & &\si{\percent}  \\
		 		 \endfirsthead
		  	\midrule} 
	},	
	row iteratorMaxiter/.style={
		every row #1 column 2/.style={
			postproc cell content/.append style={
				/pgfplots/table/@cell content/.add={$^*$}}
		}
	},	
	row iteratorMaxiter/.list={0,1,6,7,8,10,11,12,20,27},
	every last row/.style={after row=\bottomrule}, 
	]{rand_GPKC_dnn_ub_50.csv}

	\pgfplotstabletypeset[  
	font=\footnotesize,
	multicolumn names, 
	col sep=comma, 
	font=\footnotesize,
	fixed,
	fixed zerofill,
	display columns/0/.style={int detect,
		column name=$n$, 
		},  
	display columns/1/.style={int detect,
		column name=$W$,
		},
	display columns/2/.style={
		column name=$lb_{DNN}$,
		},
	display columns/3/.style={int detect,
		column name=VC+2opt,
		},
	display columns/4/.style={
		column name=Gap,
		},
	every row no 6/.style={before row={
		\midrule}},
		every row no 12/.style={before row={\midrule}},
	every row no 20/.style={before row={\midrule}},
	every row no 27/.style={before row={\midrule}},
	row iteratorMaxiter/.style={
		every row #1 column 2/.style={
			postproc cell content/.append style={
				/pgfplots/table/@cell content/.add={$^*$}}
		}
	},	
	row iteratorMaxiter/.list={0,1,2,3,5,6,7,8,9,11,12,13,14,15,16,20,21,22,23,24,25,27},
	every last row/.style={after row=\bottomrule}, 
	every head row/.style={
		before row={
		\caption{Feasible solutions for randomly generated graphs on GPKC problems GPKCrand20 (the maximum number of iterations for eADMM is 50\;000, a $^*$ indicates that the maximum number of iterations is reached)}\label{tab:gpkc_ub_20}\\	
		\midrule}, 
		after row={
		 &  &   & &\si{\percent}\\
		 		 \endfirsthead
		  	\midrule} 
	},	
	every last row/.style={after row=\bottomrule}, 
	]{rand_GPKC_dnn_ub_20.csv}

\section{Conclusions}\label{sec:conclusion}
In this paper we first introduce different SDP relaxations for $k$-equipartition problems and GPKC problems. Our tightest SDP relaxations, problems \eqref{eq:keq-met} and \eqref{eq:gpkc-met}, contain all nonnegativity constraints and transitivity constraints, which bring $O(n^3)$ constraints in total. Another kind of tight SDP relaxation, \eqref{eq:keq-dnn} and \eqref{eq:gpkc-dnn}, has only nonnegativity constraints.
While it is straight forward to consider the constraint $X\ge 0$ in a 3-block ADMM, including all the transitivity constraints is impractical.
Therefore, our strategy is to solve \eqref{eq:keq-dnn} and \eqref{eq:gpkc-dnn} and then adding violated transitivity constraints in loops to tighten both SDP relaxations.

In order to deal with the SDP problems with inequality and bound constraints, we extend the classical 2-block ADMM, which only deals with equations, to the extended ADMM for general SDP problems. This algorithm is designed to solve large instances that interior point methods fail to solve. We also introduce heuristics that build upper bounds from the solutions of the SDP relaxations. The heuristics include two parts, first we round the SDP solutions to get a feasible solution for the original graph partition problem, then we apply 2-opt methods to locally improve this feasible solution. In the procedure of rounding SDP solutions, we introduce two algorithms, the vector clustering method and the generalized hyperplane rounding method. Both methods perform well with the 2-opt method.

The extended ADMM can solve general SDP problems efficiently. For SDP problems with bound constraints, the extended ADMM deals with them separately from inequalities and equations, thereby solving the problems more efficiently. Mosek fails to solve the DNN relaxations of problems with $n\geq 300$ due to memory requirements while the extended ADMM can solve the DNN relaxations for $k$-equipartition problems on large instances up to $n =1000$ within as few as 5 minutes and for GPKC problems up to $n=500$ within as little as 1 hour.

We run numerical tests on instances from the literature and on randomly generated graphs with different densities. The results show that SDP relaxations can produce tighter bounds for dense graphs than sparse graphs.
In general, the results show that \rew{nonnegativity constraints give more improvement when $k$ increases.}

We compare our heuristics with a simulated annealing method in the generation of upper bounds for $k$-equipartition problems. Our heuristics obtain upper bounds displaying better quality within a short time limit, especially for large instances. Our methods show better performance on dense graphs, where the final gaps are less than 4\% for graphs with 80\% density, while the gaps between lower and upper bounds for sparse graphs with 20\% density are bounded by 12\%. This is mainly due to the tighter lower bounds for dense graphs.

\ifSpringer	
\begin{acknowledgements}
We thank Kim-Chuan Toh for bringing our attention to~\cite{jansson2007rigorous} and
for providing an implementation of the method therein.
We would like to thank the reviewers for their thoughtful comments and efforts towards 
improving this paper.
\end{acknowledgements}

\begin{center}
	
	{\bf Declarations}
	
\end{center}

\section*{Funding}
This study was funded by the European Union’s Horizon 2020 research and innovation programme under the Marie Sk\l{}odowska-Curie grant agreement MINOA No 764759.
\section*{Conflicts of interest/Competing interests}
The authors have no conflicts of interest to declare that are relevant to the content of this article.
\section*{Availability of data and material}
All data can be downloaded from \href{https://github.com/shudianzhao/ADMM-GP}{https://github.com/shudianzhao/ADMM-GP}.

\section*{Code availability}
The codes can be downloaded from \href{https://github.com/shudianzhao/ADMM-GP}{https://github.com/shudianzhao/ADMM-GP}.



\section*{Ethics approval}
Not applicable
\section*{Consent to participate}
Not applicable
\section*{Consent for publication}
Not applicable

\bibliographystyle{plainnat}
\else
\bibliographystyle{plainnat}

\subsection*{Acknowledgments}
	We thank Kim-Chuan Toh for bringing our attention to~\cite{jansson2007rigorous} and 
	for providing an implementation of the method therein.
	We would like to thank the reviewers for their thoughtful comments and efforts towards 
	improving this paper.
\fi

\bibliography{mybib}
\end{document}